\documentclass[11pt]{article}
\usepackage[utf8]{inputenc}

\usepackage{amsthm}
\usepackage{amsmath}
\usepackage{amssymb}
\usepackage{graphicx}
\usepackage{enumerate}
\usepackage{tikz}
\usetikzlibrary{math,patterns}
\input epsf.tex

\numberwithin{equation}{section}
\numberwithin{figure}{section}


\theoremstyle{plain}
\newtheorem{theorem}{\sffamily Theorem}
\newtheorem{proposition}{\sffamily Proposition}
\newtheorem{lemma}{\sffamily Lemma}
\newtheorem{corollary}{\sffamily Corollary}
\newtheorem{example}{\sffamily Example}
\newtheorem{remark}{\sffamily Remark}
\newtheorem{definition}{\sffamily Definition}
\newtheorem{conjecture}{\sffamily Conjecture}



\def\BET{\begin{theorem}}
\def\ENT{\end{theorem}}
\def\BEP{\begin{proposition}}
\def\ENP{\end{proposition}}
\def\BEL{\begin{lemma}}
\def\ENL{\end{lemma}}
\def\BEC{\begin{corollary}}
\def\ENC{\end{corollary}}
\def\BEE{\begin{example} \rm}
\def\ENE{\end{example}}
\def\BER{\begin{remark} \rm}
\def\ENR{\end{remark}}
\def\BED{\begin{definition} \rm}
\def\END{\end{definition}}
\def\BECJ{\begin{conjecture}}
\def\ENCJ{\end{conjecture}}

%
%

\def\bea{\begin{eqnarray}}
\def\eea{\end{eqnarray}}

\def\beq{\begin{equation}}
\def\eeq{\end{equation}}

\def\beal{\begin{align*}}

\def\eeal{ \end{align*} }

%
%


%

%

%
%

\def\bbR{{\mathbb R}}

\title{Existence of the discrete spectrum in the\\ Fichera layers and crosses of arbitrary dimension}
\author{
F.L. Bakharev\footnote{St.Petersburg State University, Universitetskaya emb. 7-9, St.Petersburg, 199034, Russia; E-mail: fbakharev@yandex.ru 
}
\ and
\setcounter{footnote}{6}
A.I. Nazarov\footnote{ St.Petersburg Department of Steklov Mathematical Institute of Russian Academy of Science, Fontanka 27, St.Petersburg, 191023, Russia, and St.Petersburg State University, Universitetskaya emb. 7-9, St.Petersburg, 199034, Russia; E-mail: nazarov@pdmi.ras.ru}
}

\begin{document}

\maketitle

\begin{abstract}
 We describe the Dirichlet spectrum structure for the Fichera layers and crosses
in any dimension $n\ge3$. Also the application of the obtained results to the classical Brownian exit times problem in these domains.
 \end{abstract}

 {\bf Keywords:} {Dirichlet layers, bound states, discrete spectrum, Brownian exit time, small deviations}

{\bf AMS classification codes:} Primary: 35J05, 81Q10;
Secondary: 35K05, 60J65.

\medskip

\section{Introduction}
The existence of bound states in infinite regions with a hard-wall (or Dirichlet) boundary conditions belongs to trademark topics in spectral theory. A number of works was done in recent years about Dirichlet Laplacians in domains with tubular outlets to infinity (quantum waveguides). In such situations bound states (trapped modes) usually appear due to the geometrical structure of the domain in a finite region, however the reason of appearance may be different. The first possible reason is the presence of a sufficiently massive resonator, or, more carefully, the possibility to inscribe a sufficiently large body into the junction (see, e.g., \cite{NazSA, NaRuUu2013, Pa2017}). The second reason is geometrical bending without changing of the width (see, e.g., \cite{ESS, DuEx1995, GoJa1992}). In usual situation, a finite number of eigenvalues may appear under the threshold of the continuous spectrum. In some special cases it is possible to prove the uniqueness of such an eigenvalue (see, e.g., \cite{NazSA, NaRuUu2013, BaMaNa}). At the same time, including an additional small geometric parameter into the problem contributes to the construction of an arbitrarily large number of eigenvalues, either by enlarging the junction zone of the waveguides (see, e.g. \cite{DaRa2012, NaSh2014}), or by the localization effect near angular points (see, e.g., \cite{BaMaNaZaa}).

Because of greater variety in the geometric structure the unbounded Dirichlet layers and their junctions have been studied to a much less extent. As in the case of waveguides, layers do not contain arbitrarily large cubes, but do contain infinitely many disjoint cubes of a fixed size. Therefore, usually the continuous spectrum of the Dirichlet Laplacian is not empty and has a positive threshold. Only few results on the existence and properties of bound states under the threshold are known in this case, see \cite{CaExKr2004, DuExKr2001, LiLu2007, LuRo2012} and \cite[Ch. 4]{EK}. In particular, the existence of an infinite number of eigenvalues was established in the conical \cite{ExTa2010, DaOuRa2015, OuPa2018} and parabolic \cite{ExLo2020} layers. 

\medskip

In this paper we consider the spectral problem for the Dirichlet Laplacian
\begin{equation}
\label{A1}
 -\Delta u(x)= \lambda u(x), \quad x\in {\cal O};
 \qquad
u(x) = 0, \quad x\in \partial{\cal O} 
\end{equation}
in two domains ${\cal O}\subset \bbR^n$: 

1. ``Corner'' (see Fig.~\ref{fig-01} for $n=3$)
\begin{equation}
\label{corner}
\Theta^n_1=\left\{x\in \bbR^n\colon \Big|\min_{1\leq j\leq n} x_j\Big|<1\right\}.
\end{equation}

\begin{figure*}[ht!]
\centering
 \includegraphics[width=0.5\textwidth]{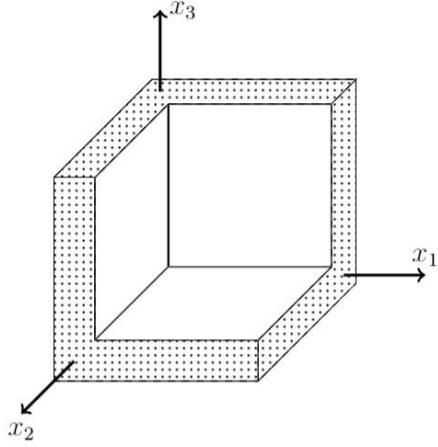}
\caption{Three-dimensional corner}
\label{fig-01}
\end{figure*}

2. ``Cross'' (see Fig.~\ref{fig-02} for $n=3$)
\begin{equation}
\label{cross}
\Theta^n_2=\left\{x\in \bbR^n\colon \min_{1\leq j\leq n} |x_j|<1\right\}.
\end{equation}

\begin{figure*}[ht!]
\centering
   \includegraphics[width=0.7\textwidth]{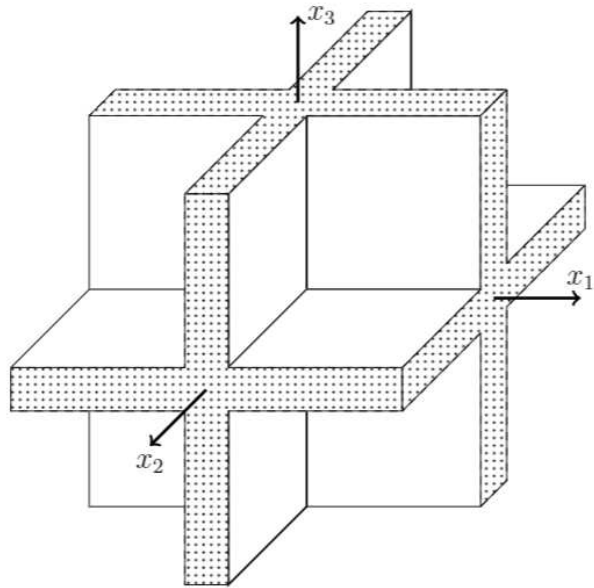}
\caption{Three-dimensional cross}
\label{fig-02}
\end{figure*}

For $n=2$ these domains are waveguides, and the problem \eqref{A1} is well studied, see, e.g., \cite{EK} and references therein. In both domains $\Theta^2_1$ and $\Theta^2_2$ the continuous spectrum of the problem \eqref{A1} coincides with the ray $[\frac {\pi^2}4,\infty)$, and there exists a unique eigenvalue $\lambda_\bullet<\frac {\pi^2}4$, see \cite{He}, \cite[Proposition 1.2.2]{EK} for the two-dimensional corner, \cite{SRW}, \cite[Proposition 1.5.2]{EK} for the two-dimensional cross, and also \cite{NazSA} for a more general setting. Numerical calculations give $\lambda_\bullet(\Theta^2_1)\approx 0.929\cdot \frac {\pi^2}4$, see, e.g., \cite{ESS}, \cite[Proposition 1.2.3]{EK}, and $\lambda_\bullet(\Theta^2_2)\approx 0.66\cdot \frac {\pi^2}4$,
see, e.g., \cite[Proposition 1.5.2]{EK}.

The case $n=3$ was considered in \cite{DaLaOu} where the domain $\Theta_1^3$ was named the Fichera layer. For both domains ${\cal O}=\Theta_j^3$ ($j=1,2$) it was proved that the problem \eqref{A1} has at most finite number of eigenvalues below the continuous spectrum. However, the existence of an eigenvalue was supported only by computation, without a theoretical proof.

We prove the existence of the discrete spectrum for the problem \eqref{A1} for ${\cal O}=\Theta^n_1$ and ${\cal O}=\Theta^n_2$ in any dimension $n\ge3$. Then we apply the obtained results to the so-called Brownian exit times problem in these domains. For some classes of convex unbounded domains this problem is well studied, see, e.g., \cite{BaDebS,BaS,WLi,LifZS}. For $\Theta^2_1$ and $\Theta^2_2$ it was considered in \cite{LN}.

\begin{remark}
The quantity of eigenvalues below the continuous spectrum in ${\cal O}=\Theta^n_1$ and ${\cal O}=\Theta^n_2$ for $n\ge3$ is unknown. We conjecture that similarly to the case $n=2$ there exists a unique eigenvalue in arbitrary dimension for both domains. 
\end{remark}

The structure of the paper is as follows. In Section \ref{prelim} we give the formal statement of the problem \eqref{A1} and collect the simplest properties of its bound state. Section \ref{contin} contains the description of the continuous spectrum of the problem, in Section \ref{discrete} we prove the existence of discrete spectrum. In Section \ref{teplo} we derive the asymptotic as $t\to\infty$ of the solution to an auxiliary initial-boundary value problem for the heat equation. Finally, Section \ref{exit} is devoted to the exit time problem.
\medskip

We use a standard notation
$$
(u,v)_{\cal O}=\int\limits_{\cal O}uv\,dx
$$
(for $u,v\in L_2({\cal O})$ it is just the scalar product).

$H^1_0({\cal O})$ stands for the subspace of all functions from the Sobolev space $H^1({\cal O})$ with zero trace on the boundary $\partial {\cal O}$.

Throughout the paper we write $\Theta^n$ if some argument is valid for both $\Theta^n_1$ and $\Theta^n_2$. 

Various constants depending only on $\Theta^n$ are denoted by $C$.

\section{Statement of the problem and preliminary information about the spectrum}
\label{prelim}

The problem \eqref{A1} admits variational formulation
\begin{equation}
\label{A3}
(\nabla u, \nabla v)_{\cal O}=\lambda (u,v)_{\cal O}, \quad \forall v\in H^1_0({\cal O}).
\end{equation}
Bilinear form on the left-hand side of \eqref{A3} is closed and positive and thus defines a self-adjoint  operator ${\mathfrak A}_{\cal O}$. Its spectrum $\sigma({\cal O})$ 
for the domains \eqref{corner}, \eqref{cross} is the main subject of the paper.

In what follows we denote by $\lambda_\bullet({\cal O})$ the lower bound of the spectrum $\sigma({\cal O})$, while $\lambda_\dagger({\cal O})$ stands for the lower bound of continuous spectrum $\sigma_c({\cal O})$. 

We mention that for a bounded domain ${\cal O}$ the spectrum is purely discrete and $\lambda_\dagger({\cal O})=+\infty$, while in the case of cylindrical domain ${\cal O}=Q\times \bbR$ ($Q\subset \bbR^{n-1}$) the spectrum is purely continuous and
\begin{equation}
\label{A6}
\lambda_\dagger({\cal O})=\lambda_\bullet({\cal O})=\lambda_\bullet(Q).
\end{equation}

Notice that the existence of a function $u\in H^1_0({\cal O})$ such that
\beq
\label{A5}
\|\nabla u; L_2({\cal O})\|^2-\lambda_\dagger({\cal O}) \,\|u; L_2({\cal O})\|^2<0,
\eeq
due to the variational principle (see, e.g., \cite[Sec. 10.2]{BS}) is equivalent to the inequality $\lambda_\bullet({\cal O})<\lambda_\dagger({\cal O})$ and guarantees the existence of at least one eigenvalue below the continuous spectrum. In this case $\lambda_\bullet({\cal O})$ is the smallest eigenvalue of the Dirichlet Laplacian in ${\cal O}$. Moreover, it is well known that the eigenvalue $\lambda_\bullet({\cal O})$ is simple and the corresponding eigenfunction $U_\bullet({\cal O})$ (the bound state) can be chosen positive. 

In Section \ref{discrete} below we prove the existence of discrete spectrum in $\Theta_1^n$ and in $\Theta_2^n$. The following Proposition provides the properties of corresponding bound states $U_j^n:=U_\bullet(\Theta_j^n)$, $j=1,2$.

\begin{proposition}\label{eigenfunction}
1. The functions $U_j^n$ are symmetric, i.e. for any permutation $\sigma\in {\cal S}_n$
$$
U_j^n(x_1,\ldots,x_n)=U_j^n(x_{\sigma(1)},\ldots,x_{\sigma(n)}).
$$
The function $U_2^n$ is also even w.r.t. any variable.

2. $U_j^n$ decay exponentially at infinity, i.e. there exists $\alpha_n>0$ such that
$$
|U_j^n(x)|+|\nabla_x U_j^n(x)|\leq C e^{-\alpha_n |x|},\quad x\in \bbR^n.
$$
\end{proposition}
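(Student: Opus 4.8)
The plan is to prove the two parts separately, with Part 1 (symmetry) coming essentially for free from uniqueness, and Part 2 (exponential decay) being the substantive analytic step via an Agmon-type estimate.

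For Part 1, I would argue as follows. The domains $\Theta^n_1$ and $\Theta^n_2$ are invariant under every coordinate permutation $\sigma\in{\cal S}_n$ (the defining expressions $\min_j x_j$ and $\min_j|x_j|$ are symmetric functions), so the corresponding unitary operator on $L_2({\cal O})$ commutes with ${\mathfrak A}_{\cal O}$. By the remarks collected in Section~\ref{prelim}, once the discrete spectrum is shown to be nonempty (Section~\ref{discrete}), the bottom eigenvalue $\lambda_\bullet(\Theta^n_j)$ is simple; hence the one-dimensional eigenspace is mapped into itself by each such unitary, and since the normalized positive bound state is unique, it must be fixed. This gives the claimed permutation symmetry of $U^n_j$. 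For $\Theta^n_2$ the domain is in addition invariant under each reflection $x_k\mapsto -x_k$ (again because $|x_k|$ is even), and the same simplicity argument forces $U^n_2$ to be even in every variable.

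For Part 2, the strategy is the standard Agmon/Persson-type exponential decay argument. Since $\lambda_\bullet:=\lambda_\bullet(\Theta^n_j)<\lambda_\dagger:=\lambda_\dagger(\Theta^n_j)$ (this strict inequality is exactly what the existence-of-discrete-spectrum result in Section~\ref{discrete} supplies, via \eqref{A5}), fix $\beta$ with $\lambda_\bullet<\beta<\lambda_\dagger$. One would like to test the eigenvalue equation \eqref{A3} against $v=e^{2\varphi}U$ where $\varphi(x)=\alpha\chi(x)|x|$ with $\chi$ a smooth cutoff vanishing near the origin, $\chi\equiv1$ far out, and $\alpha>0$ small to be chosen. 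Using $|\nabla\varphi|\le\alpha+o(1)$ and the identity
\[
(\nabla U,\nabla(e^{2\varphi}U))_{\cal O}=\|\nabla(e^{\varphi}U)\|^2_{\cal O}-\|(\nabla\varphi)e^{\varphi}U\|^2_{\cal O},
\]
the eigenvalue equation yields
\[
\|\nabla(e^{\varphi}U)\|^2_{\cal O}-\alpha^2\|e^{\varphi}U\|^2_{\cal O}\le\lambda_\bullet\|e^{\varphi}U\|^2_{\cal O}+(\text{terms supported on }\operatorname{supp}\nabla\chi).
\]
The crucial localization estimate is that outside a large ball, the bottom of the Dirichlet spectrum of $\Theta^n_j$ intersected with $\{|x|>R\}$ tends to $\lambda_\dagger$ as $R\to\infty$ — this is a Persson-type characterization of $\lambda_\dagger$, and the description of $\sigma_c$ from Section~\ref{contin} is what makes $\lambda_\dagger$ accessible. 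Splitting $\|e^\varphi U\|^2$ into the part inside the ball (where $e^\varphi$ is bounded, so this piece is $\le C\|U\|^2<\infty$) and the part outside (where the localization estimate gives $\|\nabla w\|^2\ge(\lambda_\dagger-\varepsilon)\|w\|^2$ for $w$ supported there), and choosing $\alpha$ small enough that $\lambda_\bullet+\alpha^2<\lambda_\dagger-\varepsilon<\beta$, one absorbs the outside term and concludes $\|e^\varphi U\|_{\cal O}<\infty$, i.e. $e^{\alpha|x|}U\in L_2$ for some $\alpha=\alpha_n>0$.

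The pointwise bound on $U$ and $\nabla U$ then follows from elliptic regularity: $U$ solves $-\Delta U=\lambda_\bullet U$ with zero boundary data, and $\Theta^n_j$ has uniformly (at every scale $1$) nice boundary (it is locally, up to rigid motion, a fixed finite union of half-spaces), so interior and boundary Schauder (or $L_2\to C^{1,\gamma}$) estimates on unit balls give $|U(x)|+|\nabla U(x)|\le C\|U;L_2(B_1(x)\cap\Theta^n_j)\|$, and the already-established weighted $L_2$ bound makes the right-hand side $\le Ce^{-\alpha'|x|}$ for any $\alpha'<\alpha$; relabel $\alpha_n:=\alpha'$. I expect the main obstacle to be a clean justification of the Persson-type estimate near infinity — i.e. controlling $\inf\operatorname{spec}$ of the Dirichlet Laplacian on $\Theta^n_j\cap\{|x|>R\}$ by $\lambda_\dagger-o(1)$ — which relies on the precise structure of the continuous spectrum obtained in Section~\ref{contin}; the weighted testing computation and the elliptic-regularity bootstrap are routine once that ingredient is in place.
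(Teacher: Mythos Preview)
Your argument for Part~1 is exactly the paper's: the paper says only that ``the first assertion follows from simplicity of the corresponding eigenvalue,'' and you have written out why that suffices.

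For Part~2 the paper's proof is a bare citation to classical results (\v{S}nol' 1957 and \S53 of Glazman), so your proposal is not a different route so much as an actual proof where the paper offers none. Your Agmon--Persson argument is the standard modern version of what those references contain, and it is correct in outline. Two remarks. First, the test function $v=e^{2\varphi}U$ is not known a priori to lie in $H^1_0$ (that is what you are trying to prove), so the identity should be run with a truncated weight $\varphi_N=\min(\varphi,N)$ and the limit $N\to\infty$ taken at the end; this is routine but should be said. Second, the ``main obstacle'' you anticipate---the Persson-type lower bound outside a large ball---is already in the paper: it is precisely the Lemma in Section~\ref{contin}, inequality~\eqref{almost}, which gives
\[
\|\nabla u;L_2(\Theta^n)\|^2\ge(\lambda_\dagger-\varepsilon^2)\|u;L_2(\Theta^n)\|^2-C\|u;L_2(\Theta^n\cap B^n_{\varepsilon^{-1}})\|^2,
\]
so no additional work is needed there. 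The final elliptic-regularity step is fine; note that the non-smooth edges of $\partial\Theta^n$ do extend to infinity, but the local geometry along them is uniform, so the local $L_2\to C^1$ bounds on unit balls are indeed uniform.
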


\begin{proof}
The first assertion follows from simplicity of the corresponding eigenvalue. The second one was proved even for a more general case in \cite{Shn}, see also \cite[\S53]{Gl}.
\end{proof}

\section{Continuous spectrum in $\Theta^n_1$ and $\Theta^n_2$}
\label{contin}

First we prove an auxiliary statement.

\begin{lemma}
Let $n\ge3$.
For any $\varepsilon>0$ the following inequality holds:
\begin{equation}
\label{almost}
\|\nabla u; L_2(\Theta^n)\|^2\ge \big(\lambda_\bullet(\Theta^{n-1})-\varepsilon^2\big) \,\|u; L_2(\Theta^n)\|^2 - C \|u; L_2(\Theta^n\cap B^n_{\varepsilon^{-1}})\|^2.
\end{equation}
Here $u\in H^1_0(\Theta^n)$, and $C$ does not depend on $u$ and $\varepsilon$.
\end{lemma}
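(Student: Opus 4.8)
The structure of $\Theta^n$ near infinity is essentially $(n$-many) copies of the slab-like region $\Theta^{n-1}\times\bbR$ (up to bounded pieces near the origin), so the plan is to use a partition of unity adapted to a large ball and to reduce, on each unbounded "arm", to a one-dimensional Fourier/separation argument against the known bound $\lambda_\bullet(\Theta^{n-1})$ for the cross-section. First I would fix a smooth cutoff $\chi$ with $\chi\equiv 1$ on $B^n_{R}$ and $\chi\equiv 0$ outside $B^n_{2R}$, where $R$ will eventually be comparable to $\varepsilon^{-1}$, and write $u=\chi u+(1-\chi)u$. The IMS-type localization formula gives
\begin{equation*}
\|\nabla u; L_2(\Theta^n)\|^2=\|\nabla(\chi u); L_2(\Theta^n)\|^2+\|\nabla((1-\chi)u); L_2(\Theta^n)\|^2-\|\,|\nabla\chi|\,u; L_2(\Theta^n)\|^2,
\end{equation*}
and since $|\nabla\chi|\le C/R\le C\varepsilon$ is supported in $B^n_{2R}\setminus B^n_R\subset B^n_{\varepsilon^{-1}}$ (after adjusting constants), the last term is bounded below by $-C\varepsilon^2\|u;L_2(\Theta^n\cap B^n_{\varepsilon^{-1}})\|^2$, which is absorbed into the error term on the right of \eqref{almost}. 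The first term is nonnegative and also discardable. So everything reduces to estimating $\|\nabla((1-\chi)u); L_2(\Theta^n)\|^2$ from below by $(\lambda_\bullet(\Theta^{n-1})-\varepsilon^2)\|(1-\chi)u;L_2(\Theta^n)\|^2$, and then noting $\|(1-\chi)u\|^2\ge\|u\|^2-\|u;L_2(\Theta^n\cap B^n_{\varepsilon^{-1}})\|^2$ to finish.

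For the function $v:=(1-\chi)u$, supported outside $B^n_R$, I would decompose its support into the $n$ "arms" $A_k=\{x\in\Theta^n: |x_k|\ge$ (something large)$\}$ — more precisely, far from the origin any point of $\Theta^n$ has at least one coordinate large, and one can arrange (again by a subordinate partition of unity, or by working on each half-arm $x_k>R'$, $x_k<-R'$ separately in the "corner" case) that on each arm one coordinate, say $x_k$, ranges over an unbounded interval while the remaining $n-1$ coordinates $x':=(x_j)_{j\ne k}$ satisfy the constraint defining a translated/reflected copy of $\Theta^{n-1}$. On such an arm, for fixed $x_k$, the slice is (a copy of) $\Theta^{n-1}$, and since $v(\cdot,x_k)\in H^1_0(\Theta^{n-1})$ the variational characterization of $\lambda_\bullet(\Theta^{n-1})$ gives
\begin{equation*}
\int\bigl|\nabla_{x'}v(x',x_k)\bigr|^2\,dx'\ge\lambda_\bullet(\Theta^{n-1})\int\bigl|v(x',x_k)\bigr|^2\,dx'.
\end{equation*}
Integrating in $x_k$ and adding the (nonnegative) $\partial_{x_k}v$ contribution yields $\|\nabla v;L_2(A_k)\|^2\ge\lambda_\bullet(\Theta^{n-1})\|v;L_2(A_k)\|^2$ on each arm; summing over the (finitely many) arms and controlling the overlaps — which live in a bounded set, hence contribute only to the $C\|u;L_2(\Theta^n\cap B^n_{\varepsilon^{-1}})\|^2$ term — gives the bound without even the $\varepsilon^2$ loss. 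The $-\varepsilon^2$ in \eqref{almost} is therefore slack coming purely from the cutoff gradient, which is convenient.

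\textbf{Main obstacle.} The delicate point is the geometry of the overlap region of the arms and making the "slice is a copy of $\Theta^{n-1}$" claim honest, especially for the corner $\Theta^n_1$ where $|\min_j x_j|<1$ does not separate as cleanly as $\min_j|x_j|<1$; one must check that for $|x|$ large, fixing the (unique, in the relevant region) large coordinate $x_k$, the section in the remaining variables is exactly $\{x': |\min_{j\ne k}x_j|<1\}=\Theta^{n-1}_1$ translated appropriately, and that the transition zone where two coordinates are simultaneously moderately large is contained in a fixed ball (independent of $\varepsilon$), so that a crude bound there — e.g. simply dropping the gradient term and paying $C\|u\|^2_{L_2(\cdot\cap B^n_{\varepsilon^{-1}})}$ — suffices. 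Handling the partition-of-unity cross terms between arms so that all errors are of the stated form, and verifying the constant $C$ is genuinely $\varepsilon$-independent, is where the bookkeeping concentrates; the spectral input itself is just the definition of $\lambda_\bullet(\Theta^{n-1})$ applied slice-by-slice.
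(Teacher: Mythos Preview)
Your overall strategy---IMS localization followed by a cross-section bound on each ``arm''---matches the paper's. But there is a genuine gap in the arm decomposition. You assert that ``the overlaps \ldots\ live in a bounded set'' and that ``the transition zone where two coordinates are simultaneously moderately large is contained in a fixed ball.'' For $n\ge 3$ this is false: in $\Theta^3_2$, for instance, the set $\{|x_1|>R,\ |x_2|>R,\ |x_3|<1\}$ is unbounded for every $R$, and the same phenomenon occurs in $\Theta^n_1$. So any partition of unity separating the arms $A_k=\{|x_k|\ \text{large}\}$ must have gradients supported on an \emph{unbounded} set, and you cannot push that error into $C\|u;L_2(\Theta^n\cap B^n_{\varepsilon^{-1}})\|^2$.

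The paper fixes this with an \emph{angular} partition of unity: functions $\wp_k$ depending only on $x/|x|$ and localizing to the cones $\{|x_k|>\max_{j\ne k}|x_j|\}$. Because $\wp_k$ is homogeneous of degree zero, $|\Delta\wp_k|\le C r^{-2}$, so the IMS error is $C\|r^{-1}w\|^2$ rather than something supported in a ball. This error is then split: for $r>\varepsilon^{-1}$ one has $r^{-2}<\varepsilon^2$, producing the $\varepsilon^2$ loss in \eqref{almost}; for $r\le\varepsilon^{-1}$ it goes into the ball term. Note in particular that the $-\varepsilon^2$ is \emph{not} slack from the radial cutoff (the paper cuts at a fixed radius $n+1$, not at $\varepsilon^{-1}$) but is exactly the price of the angular localization at infinity. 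Once you have the angular pieces, the support of $w\wp_k$ really does sit inside $\Theta^{n-1}\times\{|x_k|>1\}$ (this uses $|x|>n+1$ and $|x_k|\approx\max_j|x_j|$), and your slice-by-slice bound applies verbatim.
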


\begin{proof} We mainly follow the line of proof of \cite[Lemma 2.3]{K}.
Let $\rho_1$ and $\rho_2$ be smooth cutoff functions of $r=|x|$ such that 
$$
\rho_1(r)=0 \quad \text{for} \quad r<n+1, \qquad \rho_2(r)=0 \quad \text{for} \quad r>n+2, \qquad \rho_1^2+\rho_2^2=1.
$$ 
Then we derive
\begin{equation}
\label{ident}
\|\nabla u; L_2(\Theta^n)\|^2=\sum\limits_{k=1,2}\Big(\|\nabla (u\rho_k); L_2(\Theta^n)\|^2-\int\limits_{\Theta^n}u^2\Delta\rho_k\rho_k\,dx\Big),
\end{equation}
that implies
\begin{equation}
\label{ineq}
\|\nabla u; L_2(\Theta^n)\|^2\ge\|\nabla (u\rho_2); L_2(\Theta^n)\|^2-C_1\|u; L_2(\Theta^n\cap B^n_{n+2})\|^2.
\end{equation}
Denote for brevity $w=u\rho_2$.

Now we consider $n$ sets on the unit sphere
$$
A_k=\{x\in \mathbb{S}^{n-1}\colon \max_{j\ne k} |x_j|<|x_k| \},\qquad k=1,\dots,n
$$
and introduce the ``angular'' partition of unity, setting
$$
\wp_k^2(x)\equiv\wp_k^2(\Phi)=\int\limits_{A_k}\omega(\Phi-\Psi)\,dS_\Psi,\qquad \Phi=x/r\in\mathbb{S}^{n-1}
$$
(here $\omega$ is a mollifier with small given support).

So, $\sum\limits_{k=1}^n\wp_k^2(x)\equiv1$, and similarly to \eqref{ident} we derive
\begin{equation}
\label{ident1}
\|\nabla w; L_2(\Theta^n)\|^2=\sum\limits_{k=1}^n\Big(\|\nabla (w\wp_k); L_2(\Theta^n)\|^2-\int\limits_{\Theta^n}w^2\Delta\wp_k\wp_k\,dx\Big).
\end{equation}
Consider, for instance, the term corresponding to $k=n$. By the choice of cutoff functions $\rho_2$ and $\wp_n$, the support of $w\wp_n$ is contained in the set $\Theta^{n-1}\times \{|x_n|>1\}$. Therefore, we can extend $w\wp_n$ by zero to the cylindrical domain ${\cal O}=\Theta^{n-1}\times \bbR$, and the relation \eqref{A6} gives
$$
\|\nabla (w\wp_n); L_2(\Theta^n)\|^2\ge\lambda_\bullet(\Theta^{n-1})\,\|w\wp_n; L_2(\Theta^n)\|^2.
$$
Furthermore, since $\wp_n$ depends only on $x/r$, we have $|\Delta\wp_n|\le Cr^{-2}$.

Other terms in \eqref{ident1} are estimated in the same way. We sum up obtained inequalities and arrive at
\begin{align*}
\|\nabla w; L_2(\Theta^n)\|^2\ge & \,\lambda_\bullet(\Theta^{n-1})\,\|w; L_2(\Theta^n)\|^2 -C_2\|r^{-1}w; L_2(\Theta^n)\|^2\\
\ge & \,(\lambda_\bullet(\Theta^{n-1})-\varepsilon^2)\,\|w; L_2(\Theta^n)\|^2 -C_2\|w; L_2(\Theta^n\cap B^n_{\varepsilon^{-1}})\|^2.
\end{align*}
This gives \eqref{almost} in view of \eqref{ineq}.
\end{proof}

\begin{theorem} \label{spectr-contin}
The following relation holds for any $n\ge3$:
$$
\sigma_c(\Theta^n)=\big[\lambda_\bullet(\Theta^{n-1}),\infty\big).
$$
In particular, $\lambda_\dagger(\Theta^n)=\lambda_\bullet(\Theta^{n-1})$.
\end{theorem}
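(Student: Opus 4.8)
The plan is to prove the two inclusions $\sigma_c(\Theta^n)\subset[\lambda_\bullet(\Theta^{n-1}),\infty)$ and $[\lambda_\bullet(\Theta^{n-1}),\infty)\subset\sigma_c(\Theta^n)$ separately. The first inclusion is essentially a reformulation of the Lemma just proved: given any $\varepsilon>0$, the inequality \eqref{almost} shows that the quadratic form $\|\nabla u\|^2-(\lambda_\bullet(\Theta^{n-1})-\varepsilon^2)\|u\|^2$ is bounded below by $-C\|u;L_2(\Theta^n\cap B^n_{\varepsilon^{-1}})\|^2$, and since the embedding $H^1_0(\Theta^n)\hookrightarrow L_2(\Theta^n\cap B^n_{\varepsilon^{-1}})$ is compact (the truncation to a bounded set), the operator ${\mathfrak A}_{\Theta^n}-(\lambda_\bullet(\Theta^{n-1})-\varepsilon^2)$ is bounded below by a compact perturbation, so its essential (hence continuous) spectrum lies in $[0,\infty)$; letting $\varepsilon\to0$ gives $\sigma_c(\Theta^n)\subset[\lambda_\bullet(\Theta^{n-1}),\infty)$. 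One should phrase this cleanly via Weyl's theorem on invariance of the essential spectrum under relatively compact perturbations, or equivalently via a direct argument that any Weyl sequence can be localized away from $B^n_{\varepsilon^{-1}}$.

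For the reverse inclusion I would construct, for each $\lambda\ge\lambda_\bullet(\Theta^{n-1})$, a singular Weyl sequence. Write $\lambda=\lambda_\bullet(\Theta^{n-1})+\kappa^2$ with $\kappa\ge0$, and let $\Psi=\Psi(x_1,\dots,x_{n-1})$ be the (exponentially decaying, by Proposition~\ref{eigenfunction} applied in dimension $n-1$, or in the waveguide case $n=3$ the explicit ground state) bound state of $\Theta^{n-1}$, normalized in $L_2$. Consider trial functions of the product form
$$
u_m(x)=\Psi(x_1,\dots,x_{n-1})\,e^{i\kappa x_n}\,\chi_m(x_n),
$$
where $\chi_m$ is a cutoff supported in $x_n\in[m,2m]$ (with $x_n$ large enough that the support lies inside the cylindrical part $\Theta^{n-1}\times\{|x_n|>1\}\subset\Theta^n$), equal to $1$ on $[1.2m,1.8m]$, with $|\chi_m'|\le C/m$, $|\chi_m''|\le C/m^2$. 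A direct computation gives $(-\Delta-\lambda)u_m=\Psi e^{i\kappa x_n}(-\chi_m''-2i\kappa\chi_m')$, whose $L_2$ norm is $O(m^{-1/2})$ while $\|u_m\|_{L_2}$ is of order $m^{1/2}$; hence $\|(-\Delta-\lambda)u_m\|/\|u_m\|\to0$. The functions $u_m$ have disjoint supports, so after normalization they converge weakly to zero, i.e. they form a singular sequence, proving $\lambda\in\sigma_c(\Theta^n)$. Since $\sigma_c$ is closed, this yields $[\lambda_\bullet(\Theta^{n-1}),\infty)\subset\sigma_c(\Theta^n)$.

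Combining the two inclusions gives the claimed identity, and $\lambda_\dagger(\Theta^n)=\lambda_\bullet(\Theta^{n-1})$ is then immediate from the definition of $\lambda_\dagger$.

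The main obstacle is making the first inclusion fully rigorous: one must argue that the remainder term $C\|u;L_2(\Theta^n\cap B^n_{\varepsilon^{-1}})\|^2$ in \eqref{almost} genuinely corresponds to a relatively form-compact perturbation of ${\mathfrak A}_{\Theta^n}$. This is standard — it follows from the compactness of $H^1_0(\Theta^n)\hookrightarrow L_2(\Omega)$ for bounded $\Omega$ (Rellich), which holds because $\Theta^n\cap B^n_{\varepsilon^{-1}}$ is a bounded Lipschitz domain — but it should be stated explicitly. The second inclusion is routine once one checks that for $x_n$ in the support of $\chi_m$ the trial function indeed vanishes on $\partial\Theta^n$ and lies in $H^1_0$; this is guaranteed by $\Psi\in H^1_0(\Theta^{n-1})$ and the placement of the support in the cylindrical outlet.
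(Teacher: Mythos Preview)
Your proposal is correct and follows essentially the same approach as the paper: the lower bound on $\sigma_c$ comes from the Lemma together with Rellich compactness on the bounded truncation, and the upper inclusion comes from Weyl sequences supported in a cylindrical outlet. The paper phrases the first step as a direct contradiction against a singular sequence rather than invoking Weyl's theorem on relatively form-compact perturbations, but these are equivalent arguments; for the second step the paper simply declares the construction ``standard'' and cites \cite{DaLaOu}, whereas you spell it out explicitly. One small point to watch: your Weyl sequence uses the bound state $\Psi=U_\bullet(\Theta^{n-1})$, whose existence for $n-1\ge3$ is only established in Section~\ref{discrete}, so for $n\ge4$ the argument must be read as part of the same induction that interleaves Theorems~\ref{spectr-contin} and~\ref{discrete1}/Corollary~\ref{discrete2}.
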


\begin{proof}
To prove that $\lambda_\dagger(\Theta^n)\geq\lambda_\bullet(\Theta^{n-1})$, following \cite[Sec. 19]{Gl} we proceed by contradiction. Let $\lambda<\lambda_\bullet(\Theta^{n-1})$ be a point of continuous spectrum. According to
\cite[Sec. 9.1]{BS}, there is a sequence $u_k\in Dom({\mathfrak A}_{\Theta^n})$ orthonormal in $L_2(\Theta^n)$ and satisfying the relation
$$
{\mathfrak A}_{\Theta^n}u_k-\lambda u_k\to0 \quad\text{in}\quad L_2(\Theta^n).
$$
Therefore, $\|\nabla u_k; L_2(\Theta^n)\|^2\to\lambda$. 

On the other hand, we can choose $\varepsilon$ such that 
$\lambda_\bullet(\Theta^{n-1})-\varepsilon^2>\lambda$. Since $u_k$ are orthonormal, it converges weakly to zero in $L_2(\Theta^n)$. Since $\|\nabla u_k; L_2(\Theta^n)\|$ are bounded, there is a subsequence $u_{k_m}\to0$ in $L_2(\Theta^n\cap B^n_{\varepsilon^{-1}})$. Therefore, \eqref{almost} implies
$$
\liminf \|\nabla u_{k_m}; L_2(\Theta^n)\|^2\ge \lambda_\bullet(\Theta^{n-1})-\varepsilon^2.
$$
This contradiction proves the inclusion $\sigma_c(\Theta^n)\subset\big[\lambda_\bullet(\Theta^{n-1}),\infty\big)$. 

The opposite inclusion can be verified in a standard way using appropriate Weyl sequences (cf. \cite[Theorem 9.1.2]{BS}). For the set $\Theta_2^3$ it was done in \cite[Theorem 1.2]{DaLaOu}.
\end{proof}

\section{Existence of discrete spectrum}
\label{discrete}

\subsection{Discrete spectrum in $\Theta^n_1$}

\begin{theorem}\label{discrete1}
The following relation holds:
\begin{equation}\label{spectr-corner}
\lambda_\bullet(\Theta^n_1)<\lambda_\dagger(\Theta^n_1).
\end{equation}
\end{theorem}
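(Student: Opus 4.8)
The plan is to use the variational criterion \eqref{A5}. By Theorem~\ref{spectr-contin} we have $\lambda_\dagger(\Theta^n_1)=\lambda_\bullet(\Theta^{n-1}_1)=:\lambda_0$, so it suffices to exhibit one trial function $u\in H^1_0(\Theta^n_1)$ with $\|\nabla u;L_2(\Theta^n_1)\|^2<\lambda_0\,\|u;L_2(\Theta^n_1)\|^2$. I would argue by induction on $n\ge3$: for $n=3$ the ground state $U:=U^2_1$ of $\Theta^2_1$ is provided by the classical planar results recalled in the Introduction, and for $n>3$ by the already settled case $n-1$. In all cases $U>0$, $U$ decays exponentially (Proposition~\ref{eigenfunction}), and after normalization $\|U;L_2(\Theta^{n-1}_1)\|=1$, $\|\nabla U;L_2(\Theta^{n-1}_1)\|^2=\lambda_0$ and $-\Delta_{x'}U=\lambda_0U$ in $\Theta^{n-1}_1$.

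The construction rests on the elementary identity $\Theta^n_1=(-1,\infty)^n\setminus[1,\infty)^n$. Writing $x=(x',x_n)\in\mathbb{R}^{n-1}\times\mathbb{R}$, the domain is the union of the half-cylinder $\Pi:=\Theta^{n-1}_1\times(-1,\infty)$ and the ``resonator'' $R:=[1,\infty)^{n-1}\times(-1,1)$, attached to $\Pi$ along the surface $\Gamma:=\{x':\min_{j\le n-1}x_j=1\}\times(-1,1)$, which lies in the \emph{interior} of $\Theta^n_1$. Inside $\Pi$ the obvious mode $U(x')\psi(x_n)$ has Rayleigh quotient $\lambda_0+\|\psi';L_2(-1,\infty)\|^2/\|\psi;L_2(-1,\infty)\|^2\ge\lambda_0$, with equality only for constant $\psi$; but $\psi$ must vanish at the closed end $x_n=-1$, so this alone yields nothing --- $\Theta^n_1$ contains no genuine straight cylinder, and any gain must come from the widened cross-section of $R$, namely the orthant $(-1,\infty)^{n-1}$, whose Dirichlet Laplacian has spectrum $[0,\infty)$ and bottom $0<\lambda_0$.

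I would therefore take $u=U(x')\,\psi_T(x_n)+\tau\,w$, where $\psi_T\in H^1_0((-1,\infty))$ rises from $0$ at $x_n=-1$ to $1$ at $x_n=0$, equals $1$ on $[0,T]$, and tapers off on $(T,2T)$, so that $\|\psi_T';L_2(-1,\infty)\|^2=1+O(1/T)$; $w$ is a fixed smooth function supported near $\Gamma$ inside $\Theta^n_1$ and reaching into $R$; and $\tau\in\mathbb{R}$ is free. With $\mathcal J(v):=\|\nabla v;L_2(\Theta^n_1)\|^2-\lambda_0\|v;L_2(\Theta^n_1)\|^2$ one computes $\mathcal J(U\psi_T)=\|\psi_T';L_2(-1,\infty)\|^2$ (using $\|\nabla_{x'}U\|^2=\lambda_0$ and $\|U\|=1$), hence $\mathcal J(u)=\|\psi_T';L_2(-1,\infty)\|^2+2\tau c_T+\tau^2\mathcal J(w)$, and for $T$ large ($\psi_T\equiv1$ on $\operatorname{supp}w$) the cross term tends to
\[
c=(\nabla U(x'),\nabla w)_{\Theta^n_1}-\lambda_0(U(x'),w)_{\Theta^n_1}=\int_{\Gamma}\partial_\nu U\cdot w\,dS ,
\]
the bulk terms cancelling because $-\Delta_{x'}U=\lambda_0U$ in $\Theta^{n-1}_1$, and the only surface term surviving on $\Gamma$ because $U(x')$, extended by zero, is smooth across $\Gamma$ only from the side of $\Pi$. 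Since $\Gamma$ meets the concentration zone of $U$ near the reentrant corner, Hopf's lemma gives $\partial_\nu U<0$ on $\Gamma$, so a one-signed $w$ makes $c\ne0$; minimizing over $\tau$ gives $\min_\tau\mathcal J(u)=\|\psi_T';L_2(-1,\infty)\|^2-c^2/\mathcal J(w)$ (here $\mathcal J(w)>0$, since any $w$ supported in $R$ vanishes at $x_n=\pm1$ and $\pi^2/4>\lambda_0$).

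The hard part will be to arrange $c^2/\mathcal J(w)>1$, i.e.\ to force the negative junction term to outweigh the irreducible $O(1)$ caused by $\psi_T(-1)=0$. This is precisely where the unboundedness of $R$ in the $x'$-directions must be used: one takes $w$ spread out over $[1,\infty)^{n-1}$, with transverse Dirichlet quotient close to $0$ but trace on $\Gamma$ kept bounded below, so that $\mathcal J(w)$ stays small relative to $c^2$. Equivalently --- and perhaps more transparently --- inside $R$ one replaces the rigid transverse factor $U(x')$ by a wide orthant mode of small quotient with a longitudinal factor that vanishes at $x_n=-1$ and matches the arm at $x_n=1$; because $R$ is far wider than a cylinder of width $2$, the resulting local Rayleigh quotient in $R$ drops below $\lambda_0$, producing a genuinely negative contribution there, at the cost only of a bounded transition layer near $x_n=1$ where the wide profile must be folded back into $\Theta^{n-1}_1$. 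Choosing the spread large enough then yields $\mathcal J(u)<0$, hence $\lambda_\bullet(\Theta^n_1)<\lambda_0=\lambda_\dagger(\Theta^n_1)$, which closes the induction. The quantitative balance between this transition cost and the gain in $R$ is the technical core of the argument; the rest is the bookkeeping above together with Theorem~\ref{spectr-contin}.
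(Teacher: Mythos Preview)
Your framework (induction on $n$, the variational criterion \eqref{A5}, and Theorem~\ref{spectr-contin}) matches the paper's. But from that point on the argument diverges, and the part you yourself flag as ``the hard part'' is not carried out and, as written, does not appear to go through.

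The difficulty is created by your asymmetric decomposition $\Theta^n_1=\Pi\cup R$. The half-cylinder $\Pi=\Theta^{n-1}_1\times(-1,\infty)$ has a genuine Dirichlet end wall at $x_n=-1$, so any longitudinal factor $\psi$ that equals $1$ on the $x_n$--support of $w$ (which must meet $(-1,1)$) incurs an irreducible cost $\|\psi'\|^2\ge 1$ by Cauchy--Schwarz. You then need $c^2/\mathcal J(w)>1$, and your proposed mechanisms for this are problematic. Spreading $w$ far into $[1,\infty)^{n-1}$ does not help: $\partial_\nu U$ decays exponentially along $\Gamma$ (Proposition~\ref{eigenfunction}), so $c$ is essentially fixed by the values of $w$ near the corner $(1,\dots,1)$, while $\mathcal J(w)\ge(\pi^2/4-\lambda_0)\|w\|^2$ only grows. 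Your alternative --- replacing $U(x')$ by a wide orthant mode in $R$ so that the ``local Rayleigh quotient in $R$ drops below $\lambda_0$'' --- contradicts your own earlier (correct) observation that every $w$ supported in $R$ vanishes at $x_n=\pm1$ and hence satisfies $\mathcal J(w)>0$; the cross-section of $R$ may be wide, but the slab width in $x_n$ is still $2$, forcing a transverse quotient $\ge\pi^2/4>\lambda_0$. The ``matching at $x_n=1$'' you describe is also geometrically off: $R$ and $\Pi$ meet along $\Gamma$ (a hypersurface in $x'$), not at a level set of $x_n$. So the proposal stops exactly at the decisive estimate.

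The paper sidesteps this altogether by a \emph{symmetric} decomposition: it sets $\Pi^n_k=\{x\in\Theta^n_1:\ x_k>\max_{j\ne k}x_j\}$ and takes the trial function $W_\beta=U(x')e^{-\beta x_n}$ on $\Pi^n_n$ (and symmetrically on the other pieces). The point is that $\Pi^n_n$ has \emph{no} Dirichlet end wall: it extends to $x_n\to+\infty$, and its lower boundary consists of the internal interfaces $\Gamma^n_{kn}$ with the other pieces, where $W_\beta$ matches by the symmetry of $U$. Integration by parts then produces a boundary term on $\Gamma^n_{kn}$ which, after one more integration by parts in $x'$, equals $-\tfrac{1}{\sqrt2}\int_{\Gamma^{n-1}_{12}}|U|^2e^{-2\beta x_1}\,dS+O(\beta)$. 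As $\beta\to0$ this tends to a strictly negative constant, while the remaining terms are $O(\beta)$; hence $\mathcal J(W_\beta)<0$ for small $\beta$, with no competing $O(1)$ cost to overcome.
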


\begin{proof}
We use induction in $n$. As we mentioned in the Introduction, the case $n=2$ is well known. Let us prove the statement for $n=3$. We denote for simplicity $U
=U_\bullet(\Theta_1^2)$ and normalize it by condition 
$
\|U;L_2(\Theta^{2}_1)\|=1.
$

We introduce the notation
\begin{align*}
&\Pi^n_k=\left\{x\in \Theta^n_1\colon \max_{j\ne k} x_j<x_k \right\},\\
&\Gamma^n_{jk}=\Gamma^n_{kj}=\partial \Pi^n_k\cap \partial \Pi^n_j,
\end{align*}
see Fig.~\ref{fig-03} for $n=3$.

\begin{figure*}[ht!]
\centering
   \includegraphics[width=0.7\textwidth]{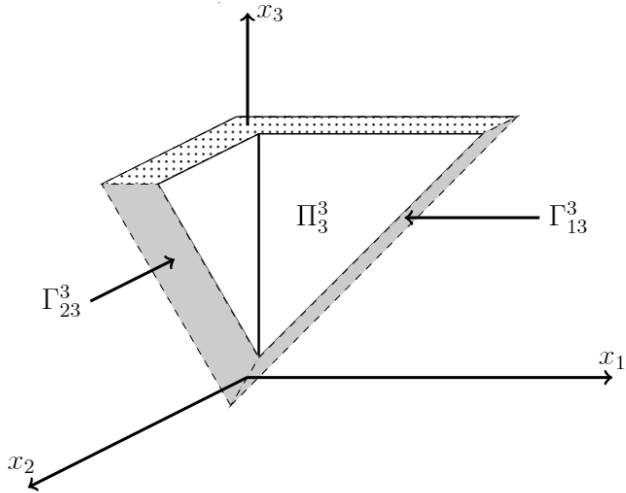}
\caption{The set $\Pi_3^3$}
\label{fig-03}
\end{figure*}
\medskip

We introduce a function $W_\beta$ in a following way: for $x=(x',x_3)\in \overline{\Pi^3_3}$ we put
$$
W_\beta(x)=U(x')e^{-\beta x_3},
$$
and define it in a similar way in $\overline{\Pi^3_k}$ for $k=1, 2$. Due to the symmetry of $U$, see Proposition \ref{eigenfunction}, the function $W_\beta$ falls into the space $H^1_0(\Theta^3_1)$ for any $\beta>0$. 

We proceed with a transformation of the left-hand side of the inequality \eqref{A5} with $u=W_\beta$, ${\cal O}=\Theta^3_1$, $\lambda_\dagger(\Theta^3_1)=\lambda_{\bullet}(\Theta^{2}_1)$ (the last equality follows from Theorem \ref{spectr-contin}). Due to the symmetry of $W_\beta(x)$, it is sufficient to calculate integrals only over $\Pi^3_3$:
$$
\aligned
I(\beta):= &\, \|\nabla W_\beta;L_2(\Pi_3^3)\|^2-\lambda_\bullet(\Theta^{2}_1)\|W_\beta;L_2(\Pi_3^3)\|^2=\\
= &\, \beta^2\|W_\beta;L_2(\Pi_3^3)\|^2 + \int\limits_{\Pi^3_3} |\nabla U (x')|^2 e^{-2\beta x_3 }\, dx-\lambda_\bullet(\Theta^{2}_1)\|W_\beta;L_2(\Pi_3^3)\|^2\\
\stackrel{*}= &\, \beta^2\|W_\beta;L_2(\Pi_3^3)\|^2 + \int\limits_{\partial \Pi_3^3} \nu\cdot \nabla U (x') U (x') e^{-2\beta x_3 }\,dS_{2}(x)
\endaligned
$$
where $\nu
$ is the unit outward normal to the boundary $\partial \Pi_3^3$ and $S_{2}$ is corresponding surface area. The equality (*) follows from integration by parts in the second term and the equation for the eigenfunction $U$. 

Denote by $J(\beta)$ the last integral over $\partial \Pi_3^3$. Due to the boundary condition for $U$, this integral is taken in fact over $\Gamma^3_{31}\cup\Gamma^3_{32}$. Using the symmetry of $U$ and relations $\nu=\big(\frac{1}{\sqrt{2}},0,-\frac{1}{\sqrt{2}}\big)$, $x_1=x_3$ on $\Gamma^3_{31}$ we rewrite $J(\beta)$ as follows:
$$
\aligned
J(\beta)= &\, 2\int\limits_{\Gamma^3_{31}} \frac{1}{\sqrt{2}}\, D_{x_1} U (x') U (x') e^{-2\beta x_3}\,dS_{2}(x)\\
= &\, 2\int\limits_{\Pi^{2}_1} D_{x_1} U (x') U (x') e^{-2\beta x_1}\,dx'.
\endaligned
$$
We again integrate by parts and use the boundary condition for $U$ and the relation $\nu=\big(-\frac{1}{\sqrt{2}},\frac{1}{\sqrt{2}}\big)$ for the outward normal on $\Gamma^2_{12}$. This gives
$$
J(\beta)=2\beta \int\limits_{\Pi^{2}_1} |U (x')|^2 e^{-2\beta x_1}d x'-\int\limits_{\Gamma^{2}_{12}} \frac{1}{\sqrt{2}}|U(x')|^2e^{-2\beta x_1}\, dS_{1}(x').
$$
Finally we arrive at
\begin{multline*}
I(\beta)=-\frac{1}{\sqrt{2}}\int\limits_{\Gamma^{2}_{12}} |U(x')|^2e^{-2\beta x_1}\, dS_{1}(x')  +
\\+ 2\beta  \int\limits_{\Pi^{2}_1} |U (x')|^2 e^{-2\beta x_1}d x' + \beta^2\|W_\beta;L_2(\Pi_3^3)\|^2\,.
\end{multline*}
Now we push $\beta\to+0$ and observe that the Lebesgue dominated convergence theorem ensures 
$$
\aligned
&\int\limits_{\Gamma^{2}_{12}} |U(x')|^2e^{-2\beta x_1}dS_{1}(x')\to \int\limits_{\Gamma^{2}_{12}} |U(x')|^2dS_{1}(x') >0,\\
2\beta &\, \int\limits_{\Pi^{2}_1} |U (x')|^2 e^{-2\beta x_1}d x'\to 0,
\endaligned
$$
(here we use the exponential decay of $U$, see Proposition \ref{eigenfunction}).
The last term in $I(\beta)$ goes to zero due to the estimate
$$
\beta^2\int\limits_{\Pi_3^3} |U (x')|^2e^{-2\beta x_3 } dx\leq \beta^2\int\limits_0^{+\infty}e^{-2\beta x_3 } \,dx_3 = \frac{\beta}{2}. 
$$
Therefore, $I(\beta)$ is negative for sufficiently small $\beta$. This proves \eqref{spectr-corner} for $n=3$ and ensures the existence of an eigenfunction $U_\bullet(\Theta_1^3)$. This fact, in turn, allows us to manage the same proof for $n=4$ etc. 
The only difference is the following: the set $\Gamma^{2}_{12}$ is just a segment, while for $n>3$ the set $\Gamma^{n-1}_{12}$ is unbounded. Nevertheless the integral
$$
\int\limits_{\Gamma^{n-1}_{12}} |U(x')|^2\,d S_{n-2}(x') 
$$
is still finite due to the exponential decay of $U$.
\end{proof}

\begin{remark}
Theorem~\ref{discrete1} guarantees that the sequence $\{\lambda_\bullet(\Theta_1^n)\}_{n=2}^\infty$ is strictly decreasing. Since it is positive the natural question is whether the limit is zero. The answer is negative. 
\end{remark}

\begin{theorem}\label{bounded-from-zero1}
For any $n\geq 2$, we have $\lambda_\bullet(\Theta_1^n)\geq \pi^2/16$.
\end{theorem}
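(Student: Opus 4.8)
The plan is to bound the Rayleigh quotient of an arbitrary $u\in H^1_0(\Theta^n_1)$ from below. Since $\Theta^n_1$ contains the unbounded set $(-1,1)\times(-1,+\infty)^{n-1}$ and its coordinate permutations, it is not contained in any slab, and the simplest comparison arguments give only upper bounds for $\lambda_\bullet$; so I would instead split $\Theta^n_1$ according to which coordinate is smallest. For $k=1,\dots,n$ put
$$
\Lambda_k=\big\{x\in\Theta^n_1\colon x_k<x_j\ \ \text{for all}\ j\ne k\big\}.
$$
The sets $\Lambda_k$ are pairwise disjoint and $\Theta^n_1\setminus\bigcup_k\Lambda_k$ lies in the union of the hyperplanes $\{x_i=x_j\}$, hence is a null set. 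Thus it is enough to prove, for each $k$,
$$
\int\limits_{\Lambda_k}|D_{x_k}u|^2\,dx\ \ge\ \frac{\pi^2}{16}\int\limits_{\Lambda_k}|u|^2\,dx,
$$
and then to sum over $k$, using $|\nabla u|^2\ge|D_{x_k}u|^2$ on $\Lambda_k$ together with the decomposition.

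For this one-piece estimate I would fix $k$, extend $u$ by zero to an $H^1(\bbR^n)$ function, and slice by lines parallel to the $x_k$-axis, using that such a function is absolutely continuous on almost every such line. For almost every fixed $\widehat x=(x_j)_{j\ne k}$ the section of $\Lambda_k$ is the interval $\{x_k\colon -1<x_k<\min(1,m)\}$, where $m=\min_{j\ne k}x_j$: on $\Lambda_k$ the coordinate $x_k$ is the minimal one, so membership in $\Theta^n_1$ forces $-1<x_k<1$, while being the strict minimum forces $x_k<m$. This interval has length $\min(1,m)+1\le2$, and the zero extension of $u$ vanishes for $x_k\le-1$ (those points lie outside $\Theta^n_1$), so by continuity along almost every line $u$ vanishes at the left endpoint $x_k=-1$. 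Applying on each line the sharp Poincar\'e inequality on an interval of length $L\le2$ with a Dirichlet condition at one end only, $\int|f'|^2\ge\frac{\pi^2}{4L^2}\int|f|^2\ge\frac{\pi^2}{16}\int|f|^2$, and integrating in $\widehat x$, gives the one-piece estimate.

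The only point requiring care is the status of the right endpoint $x_k=\min(1,m)$ of a section. If $m\ge1$ it is the point $x_k=1$ at which all coordinates are $\ge1$, so it lies on $\partial\Theta^n_1$ and $u$ vanishes there too --- in that case one even gets the constant $\pi^2/4$. If $m<1$ the endpoint $x_k=m$ lies on an interface $\{x_k=x_j\}$ interior to $\Theta^n_1$ (one passes into a neighbouring $\Lambda_j$), so $u$ is uncontrolled there; this is precisely why only the one-sided inequality is available and the constant $\pi^2/16=\pi^2/(4\cdot 2^2)$, rather than $\pi^2/4$, appears. I expect this bookkeeping of the boundary behaviour on the one-dimensional sections, together with the routine justification of the slicing for Sobolev functions, to be the whole substance of the proof. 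Note that nothing above uses $n\ge3$, so the same argument also gives $\lambda_\bullet(\Theta^2_1)\ge\pi^2/16$.
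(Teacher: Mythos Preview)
Your argument is correct and is essentially the paper's own proof: the sets $\Lambda_k$ coincide with the paper's $\widetilde\Pi^n_k$, and your one-sided Poincar\'e inequality on each $x_k$-section is exactly the ``classical Steklov inequality'' the paper invokes, followed by integration in the remaining variables and summation over $k$. Your discussion of the right endpoint (interior interface versus boundary point) is a helpful elaboration but not an additional ingredient.
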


\begin{proof}
We split the set $\Theta_1^n$ into $n$ parts
$$
\widetilde \Pi^n_k=\left\{x\in \Theta^n_1\colon \min_{j\ne k} x_j>x_k \right\},
$$ 
see Fig.~\ref{fig-04} for $n=3$.

\begin{figure*}[ht!]
\centering
\includegraphics[width=0.8\textwidth]{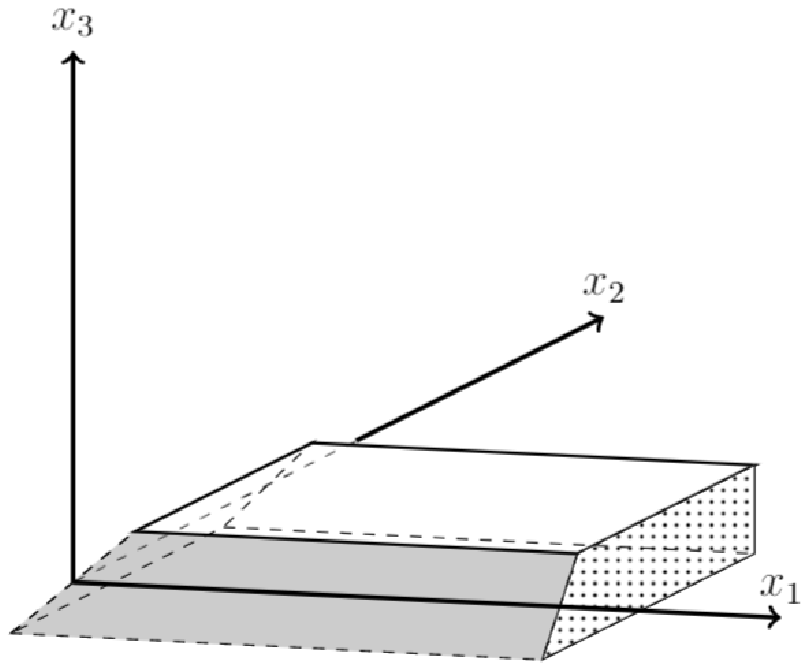}
\caption{The set $\widetilde\Pi_3^3$}
\label{fig-04}
\end{figure*}

For any $u\in H^1_0(\Theta_1^n)$ we have $u|_{x_n=-1}=0$. So, the classical Steklov inequality gives
$$
\int\limits_{-1}^{a}u^2(x',x_n)\,dx_n\le \Big(\frac {2}{\pi}\,(a+1)\Big)^2\int\limits_{-1}^{a}(D_{x_n}u)^2(x',x_n)\,dx_n
$$
for $a\in(-1,1)$. Integrating over $x'$ provides us the inequality
$$
\|\nabla u;L_2(\widetilde\Pi_n^n)\|^2\geq\|D_{x_n} u;L_2(\widetilde\Pi_n^n)\|^2\geq \frac{\pi^2}{16}  \|u;L_2(\widetilde\Pi_n^n)\|^2. 
$$
The same estimate is valid for other $\widetilde\Pi_k^n$, and the statement follows.
\end{proof}

\subsection{Discrete spectrum in $\Theta^n_2$}

The inequality $\lambda_\bullet(\Theta^n_2)<\lambda_\dagger(\Theta^n_2)$ can be proved by the same method as \eqref{spectr-corner}. We prefer to prove a slightly more general statement dealing with arbitrary perturbed cylinder, cf. \cite{PT}. 

\begin{theorem}\label{cylinder-with-ears}
Let ${\cal O}\supsetneq(Q\times\bbR)$ where $Q\subset\bbR^{n-1}$  but  ${\cal O}\cap \{x:\,|x_n|>R\}= Q\times\{|x_n|>R\}$. Finally, suppose that there is a bound state $U_\bullet(Q)$. Then $\lambda_\bullet({\cal O})<\lambda_\bullet(Q)$.
\end{theorem}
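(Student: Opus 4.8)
The plan is to use the variational principle in the form \eqref{A5}: it suffices to produce a trial function $u\in H^1_0({\cal O})$ with $\|\nabla u;L_2({\cal O})\|^2-\lambda_\bullet(Q)\,\|u;L_2({\cal O})\|^2<0$, since $\lambda_\dagger({\cal O})=\lambda_\bullet(Q)$ holds because ${\cal O}$ coincides with the cylinder $Q\times\bbR$ outside the slab $\{|x_n|\le R\}$ (this identifies $\sigma_c({\cal O})$ with $[\lambda_\bullet(Q),\infty)$ exactly as in Theorem \ref{spectr-contin}). The natural candidate, mimicking the construction of $W_\beta$ in the proof of Theorem \ref{discrete1}, is $u_\beta(x',x_n)=U_\bullet(Q)(x')\,\chi_\beta(x_n)$, where $\chi_\beta$ is a cutoff equal to $1$ on $[-R,R]$ and decaying like $e^{-\beta(|x_n|-R)}$ for $|x_n|>R$; normalize $U_\bullet(Q)$ by $\|U_\bullet(Q);L_2(Q)\|=1$. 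Since $U_\bullet(Q)$ vanishes on $\partial Q$ and ${\cal O}\supset Q\times\bbR$, this $u_\beta$ indeed lies in $H^1_0({\cal O})$.

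Next I would compute the Rayleigh quotient defect. Write ${\cal O}=(Q\times\bbR)\cup K$, where $K={\cal O}\setminus\overline{Q\times\bbR}$ is the "ears", a bounded set contained in $\{|x_n|\le R\}$. On $Q\times\bbR$, separation of variables together with the eigenvalue equation for $U_\bullet(Q)$ gives
$$
\int\limits_{Q\times\bbR}\!\big(|\nabla u_\beta|^2-\lambda_\bullet(Q)u_\beta^2\big)\,dx
=\int\limits_{\bbR}\!\big(\chi_\beta'(x_n)\big)^2\,dx_n\cdot\|U_\bullet(Q);L_2(Q)\|^2+0,
$$
and the right-hand side is $O(\beta)$ as $\beta\to+0$, exactly as the term $\beta^2\|W_\beta;L_2(\Pi_3^3)\|^2$ behaved in the proof of Theorem \ref{discrete1}. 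On the ears $K$, since $u_\beta=U_\bullet(Q)(x')$ there (the cutoff is $1$ on $\{|x_n|\le R\}\supset K$) and $\beta$ does not enter, the contribution
$$
\int\limits_{K}\!\big(|\nabla u_\beta|^2-\lambda_\bullet(Q)u_\beta^2\big)\,dx=:-c
$$
is a fixed real number independent of $\beta$. Hence $I(\beta)=O(\beta)-c$, and the whole point reduces to showing $c>0$, i.e. that adding the ears strictly lowers the Rayleigh quotient of the extended-by-$U_\bullet(Q)$ function.

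The main obstacle is precisely the sign of $c$. I would establish $c>0$ by an integration-by-parts argument paralleling the computation of $J(\beta)$ in Theorem \ref{discrete1}: extend $U_\bullet(Q)$ by the same formula (constant in $x_n$) to all of $\{|x_n|\le R\}$, call it $V$; then on $K$ one has $\Delta V=-\lambda_\bullet(Q)V$ away from $\partial Q$, so
$$
\int\limits_K\!\big(|\nabla V|^2-\lambda_\bullet(Q)V^2\big)\,dx
=\int\limits_{\partial K}\! V\,\partial_\nu V\,dS,
$$
and the boundary $\partial K$ splits into the part lying in $\partial{\cal O}$, where $V=0$ (the Dirichlet trace of $u_\beta$), and the interface $\partial K\cap(\partial Q\times\bbR)$, where $V=0$ again because $U_\bullet(Q)$ vanishes on $\partial Q$. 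A first naive reading makes the boundary term vanish, which is wrong; the resolution — again as in Theorem \ref{discrete1} — is that $V$ restricted to the closed cylinder satisfies the Dirichlet condition only on $\partial Q\times\bbR$, and the genuine surviving contribution comes from the normal-derivative jump of $u_\beta$ across the interface between $K$ and $Q\times\bbR$, evaluated from the cylinder side where $\partial_\nu U_\bullet(Q)\ne0$ on $\partial Q$. Carrying this out carefully yields $-c=-\int_{\Gamma} U_\bullet(Q)\,\partial_\nu U_\bullet(Q)\,dS$ over the appropriate interface portion $\Gamma$, and the strict positivity of $c$ follows from the strict positivity of $U_\bullet(Q)$ in $Q$ together with the Hopf lemma ($\partial_\nu U_\bullet(Q)<0$ on $\partial Q$ with $\nu$ the outward normal of $Q$), exactly the mechanism that produced $\int_{\Gamma^2_{12}}|U|^2\,dS_1>0$ before. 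Finiteness of this interface integral is guaranteed by the exponential decay of $U_\bullet(Q)$. Once $c>0$ is in hand, choosing $\beta$ small enough makes $I(\beta)<0$, which gives \eqref{A5} and hence $\lambda_\bullet({\cal O})<\lambda_\bullet(Q)$.
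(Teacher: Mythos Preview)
Your trial function $u_\beta(x',x_n)=U_\bullet(Q)(x')\,\chi_\beta(x_n)$ is only defined for $x'\in Q$. Since every point of the ears $K={\cal O}\setminus\overline{Q\times\bbR}$ has $x'\notin\overline Q$, the only extension compatible with $u_\beta\in H^1_0({\cal O})$ is $u_\beta\equiv 0$ on $K$. Then the contribution of $K$ to the defect $I(\beta)$ is identically zero, not $-c$ with $c>0$, and you are left with $I(\beta)=\int_{\bbR}(\chi_\beta')^2\,dx_n=\beta>0$: the product ansatz sits exactly on the threshold. The integration-by-parts rescue you sketch cannot help, because the interface between $K$ and the cylinder lies in $\partial Q\times\bbR$, where $U_\bullet(Q)=0$ by the Dirichlet condition; any term of the form $\int_\Gamma U_\bullet(Q)\,\partial_\nu U_\bullet(Q)\,dS$ therefore vanishes regardless of the Hopf sign of $\partial_\nu U_\bullet(Q)$. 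The analogy with the computation of $J(\beta)$ in Theorem~\ref{discrete1} is misleading: there the interfaces $\Gamma^3_{jk}$ are \emph{interior} hypersurfaces of the cross-section $\Theta^{2}_1$, on which $U=U_\bullet(\Theta^2_1)$ is strictly positive, and that is precisely why the surviving integral $\int_{\Gamma^2_{12}}|U|^2\,dS_1$ is nonzero. No such interior interface is available here.

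The paper supplies the missing idea by perturbing the trial function inside the slab. One restricts to ${\cal O}_R={\cal O}\cap\{|x_n|<R\}$ and considers the Rayleigh quotient $J_R$ over functions equal to $U_\bullet(Q)$ on the caps $Q\times\{\pm R\}$ and vanishing on the rest of $\partial{\cal O}_R$. The zero-extended function $u_R$ (your $u_\beta$ restricted to ${\cal O}_R$) gives $J_R(u_R)=\lambda_\bullet(Q)$, but it cannot be the minimizer: a minimizer would satisfy $-\Delta u=\lambda_\bullet(Q)u$ in ${\cal O}_R$ while vanishing on the nonempty open set $K$, which the strong maximum principle rules out. Hence there exists $\widehat u_R$ with $J_R(\widehat u_R)<\lambda_\bullet(Q)$, and gluing $\widehat u_R$ to exponential tails $U_\bullet(Q)(x')e^{-\beta(|x_n|-R)}$ gives a defect $A\bigl(J_R(\widehat u_R)-\lambda_\bullet(Q)\bigr)+\beta<0$ for small $\beta$.
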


\begin{proof}
In the case where all cross-sections ${\cal O}\cap \{x:\,x_n=a\}$ are bounded (in particular, $Q$ is bounded) this statement is well known, see, e.g., \cite[Theorem 1.4]{EK}. However, the proof in \cite{EK} is not applicable in our case.

We normalize $U_\bullet(Q)$ by condition $\|U_\bullet(Q);L_2(Q)\|=1$.

First, we consider the domain 
$$
{\cal O}_R={\cal O}\cap \{x:\,|x_n|<R\}\supsetneq Q\times (-R,R)
$$
and define the Rayleigh quotient
$$
J_R(u)=\frac {\|\nabla u; L_2({\cal O}_R)\|^2}{\|u; L_2({\cal O}_R)\|^2}
$$
on the set
$$
M_R=\{u\in H^1({\cal O}_R)\,: \, u\big|_{Q\times \{-R,R\}}=U_\bullet(Q);\ \ u\big|_{\partial{\cal O}_R\setminus(Q\times \{-R,R\})}=0\}.
$$
We claim that $\inf\limits_{u\in M_R}J_R(u)<\lambda_\bullet(Q)$. Indeed, the function
$$
u_R(x)\equiv u_R(x',x_n)=\begin{cases}
U_\bullet(Q)(x'), & x'\in Q;\\
0,& x'\notin Q
\end{cases}
$$
evidently falls into $M_R$, and $J_R(u_R)=\lambda_\bullet(Q)$. However, if $u_R$ minimizes $J_R$ then it should satisfy the Euler-Lagrange equation $-\Delta u_R=\lambda_\bullet(Q)u_R$ in ${\cal O}_R$. This is impossible by the maximum principle, and the claim follows.

Thus, there is a function $\widehat u_R\in M_R$ such that $J_R(\widehat u_R)<\lambda_\bullet(Q)$. Denote $A=\|\widehat u_R; L_2({\cal O}_R)\|^2$.

Now we define a function in $H^1_0({\cal O})$ (here $\beta>0$)
$$
\widetilde W_\beta(x)\equiv \widetilde W_\beta(x',x_n)=\begin{cases}
U_\bullet(Q)(x')e^{-\beta (|x_n|-R)}, & |x_n|>R;\\
\widehat u_R(x),& |x_n|\le R.
\end{cases}
$$
Direct calculation gives
$$
\|\widetilde W_\beta; L_2({\cal O})\|^2=A+\frac 1\beta;\qquad \|\nabla \widetilde W_\beta; L_2({\cal O})\|^2=A\cdot J_R(\widehat u_R)+\frac {\lambda_\bullet(Q)}\beta+\beta,
$$
and therefore
$$
\|\nabla \widetilde W_\beta; L_2({\cal O})\|^2-\lambda_\bullet(Q) \,\|\widetilde W_\beta; L_2({\cal O})\|^2=A\cdot(J_R(\widehat u_R)-\lambda_\bullet(Q))+\beta,
$$
that is negative for sufficiently small $\beta$.
\end{proof}

\begin{corollary}\label{discrete2}
For any $n\ge2$, the following relation holds:
\begin{equation}\label{spectr-cross}
\lambda_\bullet(\Theta^n_2)<\lambda_\dagger(\Theta^n_2).
\end{equation}
\end{corollary}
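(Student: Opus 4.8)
The plan is to obtain the corollary as an immediate application of Theorem \ref{cylinder-with-ears}, combined with an induction on $n$ to secure the bound states it requires.

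First I would check that $\Theta^n_2$ fits the ``perturbed cylinder'' template of Theorem \ref{cylinder-with-ears} over $Q = \Theta^{n-1}_2 \subset \bbR^{n-1}$ (for $n = 2$, over $Q = (-1,1)$), with $R = 1$. If $|x_n| > 1$, then $\min_{1\le j\le n}|x_j| = \min\big(\min_{1\le j\le n-1}|x_j|,\, |x_n|\big)$ is $< 1$ if and only if $\min_{1\le j\le n-1}|x_j| < 1$; hence $\Theta^n_2 \cap \{|x_n| > 1\} = \Theta^{n-1}_2 \times \{|x_n| > 1\}$. Also $\Theta^{n-1}_2 \times \bbR \subset \Theta^n_2$, and the inclusion is strict (take $x_1 = \dots = x_{n-1} = 2$, $x_n = 0$). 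So once a bound state $U_\bullet(\Theta^{n-1}_2)$ is available, Theorem \ref{cylinder-with-ears} applies and yields $\lambda_\bullet(\Theta^n_2) < \lambda_\bullet(\Theta^{n-1}_2)$.

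For the base case $n = 2$ the section $Q = (-1,1)$ is bounded, its bound state is $\cos(\pi x_1/2)$ with $\lambda_\bullet(Q) = \pi^2/4$, and the theorem gives $\lambda_\bullet(\Theta^2_2) < \pi^2/4 = \lambda_\dagger(\Theta^2_2)$, the last equality being the classical two-dimensional fact recalled in the Introduction. For the inductive step I would assume $\lambda_\bullet(\Theta^{n-1}_2) < \lambda_\dagger(\Theta^{n-1}_2)$; by the variational principle recalled in Section \ref{prelim} this means $\lambda_\bullet(\Theta^{n-1}_2)$ is the smallest Dirichlet eigenvalue in $\Theta^{n-1}_2$, so the bound state $U_\bullet(\Theta^{n-1}_2)$ exists. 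Theorem \ref{cylinder-with-ears} then gives $\lambda_\bullet(\Theta^n_2) < \lambda_\bullet(\Theta^{n-1}_2)$, and since $\lambda_\dagger(\Theta^n_2) = \lambda_\bullet(\Theta^{n-1}_2)$ by Theorem \ref{spectr-contin}, we obtain \eqref{spectr-cross}. There is no serious obstacle: the only points needing attention are the elementary verification that the outlets of $\Theta^n_2$ beyond $|x_n| = 1$ are exactly cylindrical (so that $R = 1$ works and the induction closes) and the correct bookkeeping of which dimension's spectral quantities enter at each stage.
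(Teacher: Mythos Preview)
Your argument is correct and follows the same route as the paper: induction on $n$, with Theorem~\ref{cylinder-with-ears} supplying the strict inequality $\lambda_\bullet(\Theta^n_2)<\lambda_\bullet(\Theta^{n-1}_2)$ once a bound state in $\Theta^{n-1}_2$ is available, and Theorem~\ref{spectr-contin} identifying $\lambda_\bullet(\Theta^{n-1}_2)$ with $\lambda_\dagger(\Theta^n_2)$. The only minor difference is the base case: the paper simply cites the known two-dimensional result, whereas you derive it from Theorem~\ref{cylinder-with-ears} with $Q=(-1,1)$ (together with the classical fact $\lambda_\dagger(\Theta^2_2)=\pi^2/4$); your explicit verification that $\Theta^n_2\cap\{|x_n|>1\}=\Theta^{n-1}_2\times\{|x_n|>1\}$ and $\Theta^{n-1}_2\times\bbR\subsetneq\Theta^n_2$ is a welcome detail that the paper leaves implicit.
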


\begin{proof}
Similarly to Theorem \ref{discrete1}, we use induction in $n$. As it was noticed in the Introduction, the inequality \eqref{spectr-cross} for $n=2$ is known long ago. Therefore, there is a bound state $U_\bullet(\Theta^2_2)$, and we can apply Theorem \ref{cylinder-with-ears} to $\Theta^3_2$. Theorems \ref{spectr-contin} and \ref{cylinder-with-ears} ensure \eqref{spectr-cross}
for $n=3$, etc.
\end{proof}

\begin{remark}\label{bounded-from-zero2}
As for $\Theta_1^n$, we have a strictly decreasing sequence $\{\lambda_\bullet(\Theta_2^n)\}_{n=2}^\infty$. We conjecture that similarly to Theorem \ref{bounded-from-zero1} its limit is strictly positive. However, this problem is completely open.

\end{remark}

\section{The heat equation in $\Theta^n_1$ and $\Theta^n_2$}
\label{teplo}

In this Section for the sake of brevity we denote by $\lambda^n=\lambda_\bullet(\Theta^n)$ the least eigenvalue of ${\mathfrak A}_{\Theta^n}$. Also we denote by $U^n$ corresponding bound state and by $\Lambda^n$ the infimum of other spectrum $\sigma(\Theta^n)\setminus\{\lambda^n\}$ (note that $\Lambda^n$ either coincides with $\lambda_\dagger(\Theta^n)$ or is another eigenvalue).
\medskip

Consider the initial-boundary value problem
\begin{equation} \label{heat}
   {\cal L} v:=\partial_tv-\frac 12\Delta v=0
   \quad\mbox{in}\quad {\cal O}\times \bbR_+; \qquad  v\big|_{x\in\partial{\cal O}}=0;
   \qquad v\big|_{t=0}\equiv 1.
\end{equation}

\begin{theorem}
\label{asymp-heat} Let ${\cal O}=\Theta^n$. Then the solution $V^n$ of the problem \eqref{heat} admits a representation
\begin{equation} \label{asymp1}
\aligned
  V^n(x;t)= &\, {\cal A}_n \, U^n(x)\exp\big(-\frac {\lambda^n}2 t\big)
   +v^n(x;t),\\ 
   |v^n(x;t)|\le &\, C(t+1)\exp\big(-\frac {\Lambda^n}2 t\big),
\endaligned
\end{equation}
where ${\cal A}_n = \int\limits_{\Theta^n} U^n(x)\,dx$.
\end{theorem}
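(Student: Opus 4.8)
The plan is to use the spectral resolution of the self-adjoint operator ${\mathfrak A}_{\Theta^n}$ to write the solution of \eqref{heat} and then isolate the contribution of the ground state $\lambda^n$. Since $v\equiv 1$ is not in $L_2(\Theta^n)$ (the domain is unbounded), the first technical point is to make sense of the initial condition: one works with the heat semigroup $e^{-\frac t2 {\mathfrak A}_{\Theta^n}}$ acting on bounded data, using e.g. the probabilistic (Feynman--Kac) representation $V^n(x;t)=\mathbb P_x[\tau_{\Theta^n}>t]$, where $\tau_{\Theta^n}$ is the exit time of Brownian motion, so that $0\le V^n\le 1$ and $V^n(\cdot;t)\in L_2(\Theta^n)$ for every $t>0$ by the exponential decay of the heat kernel away from $\partial\Theta^n$ combined with the fact that $\Theta^n$ does not contain arbitrarily large balls. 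Alternatively one can truncate: solve the problem in $\Theta^n\cap B^n_R$ with the same data and pass to the limit $R\to\infty$, using monotonicity and the a priori bound $0\le V^n\le 1$.

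Once $V^n(\cdot;t_0)\in L_2(\Theta^n)$ for some fixed small $t_0>0$, expand it in the spectral measure $dE_\mu$ of ${\mathfrak A}_{\Theta^n}$: writing ${\cal A}_n U^n$ for the projection onto the (simple) eigenspace at $\lambda^n$ — here one uses that $U^n$ is normalized so that $\|U^n;L_2(\Theta^n)\|=1$ and the coefficient is $(V^n(\cdot;t_0),U^n)_{\Theta^n}$ — we get for $t>t_0$
\[
V^n(x;t)=(V^n(\cdot;t_0),U^n)_{\Theta^n}\,e^{-\frac{\lambda^n}{2}(t-t_0)}U^n(x)+\int_{[\Lambda^n,\infty)}e^{-\frac\mu2 (t-t_0)}\,d\big(E_\mu V^n(\cdot;t_0)\big)(x).
\]
The remainder term, call it $v^n(x;t)$, then satisfies $\|v^n(\cdot;t);L_2(\Theta^n)\|\le e^{-\frac{\Lambda^n}{2}(t-t_0)}\|V^n(\cdot;t_0);L_2(\Theta^n)\|$, which gives the $L_2$-version of \eqref{asymp1} up to the polynomial factor. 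To upgrade the $L_2$ bound to the pointwise bound $|v^n(x;t)|\le C(t+1)\exp(-\frac{\Lambda^n}2 t)$ I would apply a parabolic local regularity/mean-value estimate (interior and boundary Li--Yau / De Giorgi--Nash--Moser bounds for $\partial_t-\frac12\Delta$): since $v^n$ solves ${\cal L}v^n=0$ with zero boundary data on $\partial\Theta^n$, its value at $(x;t)$ is controlled by its $L_2$-norm over a unit space-time cylinder around $(x;t)$, i.e. $|v^n(x;t)|\le C\,\|v^n;L_2(\Theta^n\times(t-1,t))\|$, and integrating the $L_2$ decay over that time window produces the stated bound; the identification of ${\cal A}_n=\int_{\Theta^n}U^n\,dx$ as the limiting coefficient comes from applying the same expansion with $t_0\to0^+$ and using $(1,U^n)_{\Theta^n}=\int_{\Theta^n}U^n$ together with continuity of the semigroup, the integral being finite by the exponential decay of $U^n$ (Proposition \ref{eigenfunction}).

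The main obstacle is the non-integrability of the initial datum and the possibly unbounded cross-sections of $\Theta^n$: one must justify that $V^n(\cdot;t)\in L_2(\Theta^n)$ and that the spectral expansion converges, rather than merely having an asymptotic in a weighted space. The clean way around this is the truncation argument: for the bounded domains $\Theta^n_R=\Theta^n\cap B^n_R$ everything is elementary (pure point spectrum, $V^n_R=\sum_k c_k(R)e^{-\mu_k(R)t/2}\varphi_k^R$), and then one shows $V^n_R\uparrow V^n$ monotonically, $\mu_1(R)\downarrow\lambda^n$, $\varphi_1^R\to U^n$ in $L_2$ on compacts, $\mu_2(R)\to\Lambda^n$ (these limits use Theorems \ref{spectr-contin}, \ref{discrete1}, \ref{cylinder-with-ears} and standard domain-monotonicity of Dirichlet eigenvalues), while the tail $\sum_{k\ge2}$ stays bounded in $L_2$ uniformly in $R$ with the exponential factor $e^{-\Lambda^n t/2}$; passing to the limit and then invoking the parabolic pointwise estimate on the fixed domain $\Theta^n$ yields \eqref{asymp1}. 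The one genuinely delicate point inside this scheme is controlling the $L_2$-norm of the remainder uniformly in $R$ without losing the exponential rate — this requires bounding $\|V^n_R(\cdot;t_0);L_2\|$ independently of $R$, which follows again from $0\le V^n_R\le 1$ and the existence of a fixed mesh of disjoint unit cubes forcing $\int_{\Theta^n}(V^n(\cdot;t_0))^2\,dx<\infty$ via Gaussian off-diagonal heat-kernel decay past $\partial\Theta^n$.
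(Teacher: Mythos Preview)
Your approach has a genuine gap: the claim that $V^n(\cdot;t_0)\in L_2(\Theta^n)$ for some (or any) $t_0>0$ is false when $n\ge 2$. Along each cylindrical outlet of $\Theta^n$ --- say the set where $x_n$ is large --- the cross-section is $\Theta^{n-1}$, and a Brownian motion started at $(x',x_n)$ with $x_n\gg 1$ exits $\Theta^n$ essentially through the lateral boundary, so $V^n(x',x_n;t)\approx V^{n-1}(x';t)$ uniformly in $x_n$. This quantity is bounded below by a positive constant (depending on $t$) on a set of infinite measure, so $\int_{\Theta^n}|V^n(\cdot;t)|^2\,dx=+\infty$ for every $t>0$. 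The same observation kills the truncation argument: $\|V^n_R(\cdot;t_0);L_2\|$ grows without bound as $R\to\infty$, so you cannot pass to the limit with a uniform $L_2$ remainder estimate. Consequently the spectral expansion of $V^n(\cdot;t_0)$ with respect to $dE_\mu$ is not available, and neither is the $L_2$ bound on the remainder $v^n$ (indeed $v^n(\cdot;t)\notin L_2(\Theta^n)$ either, since in the outlet it agrees with $V^{n-1}$ up to the exponentially small $U^n$-term).

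This is exactly the obstruction the paper's proof is designed to overcome. The authors proceed by induction on $n$: they write $V^n$ as an explicit combination of the lower-dimensional solutions $V^{n-1}$ and $V^{n-2},\dots$ multiplied by cutoffs along the outlets, plus a correction $\widehat v^n$. The point of this ansatz is that the subtracted terms carry all of the non-$L_2$ behavior at infinity, so $\widehat v^n$ solves an inhomogeneous heat problem whose right-hand side $f$ and initial datum $\varphi$ are rapidly decaying (resp.\ compactly supported). Only then can one legitimately apply the Duhamel formula and the spectral resolution of ${\mathfrak A}_{\Theta^n}$ to $\widehat v^n$, split off the ground-state projection, and bound the rest in $L_2$ by $e^{-\Lambda^n t/2}$; a further integration-by-parts computation is needed to identify the projection coefficient with ${\cal A}_n=(1,U^n)_{\Theta^n}$. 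Your parabolic-regularity step from $L_2$ to $L_\infty$ is the same as theirs, but it only becomes applicable after this subtraction.
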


\begin{proof}
We again use induction in $n$. The case $n=2$ was considered in \cite{LN}. Along with \eqref{asymp1}, it was in fact proved that the difference
\begin{equation}\label{difference}
\widehat v^2(x_1,x_2;t):=V^2(x_1,x_2;t)-V^1(x_2;t)\chi(x_1)-V^1(x_1;t)\chi(x_2)
\end{equation}
is rapidly decaying as either $|x_1|\to\infty$ or $|x_2|\to\infty$. Here 
\begin{equation}\label{heat1}
V^1(y;t) =\frac{4}{\pi}\, \sum_{k=0}^\infty
    \frac{(-1)^k}{2k+1}\, \cos\left(\pi(k+1/2)y\right) \exp({-(k+1/2)^2\pi^2t/2})
\end{equation}
is the well-known solution of the problem \eqref{heat} for $n=1$, $\Theta^1=(-1,1)$, while
$\chi$ is a smooth cutoff function such that 
$$
   \chi(x_1)=0
    \ \ \mbox{for} \ \ |x_1|<2;
    \quad \chi(x_1)=1\ \ \mbox{for}
    \ \ |x_1|>3.
$$

We seek for the solution $V^3$ in the form
$$
\aligned
V^3(x;t)= &\, V^2(x_1,x_2;t)\chi(x_3)+V^2(x_2,x_3;t)\chi(x_1)+V^2(x_3,x_1;t)\chi(x_2)\\
- &\, V^1(x_1;t)\chi(x_2)\chi(x_3)-V^1(x_2;t)\chi(x_3)\chi(x_1)\\
-&\,V^1(x_3;t)\chi(x_1)\chi(x_2)+\widehat v^3(x_1,x_2,x_3;t).
\endaligned
$$
Then the correction term $\widehat v^3$ is the solution of the problem
\begin{equation*} 
   {\cal L} \widehat v^3=f
   \quad\mbox{in}\quad \Theta^3\times \bbR_+; \qquad  v\big|_{x\in\partial\Theta^3}=0;
   \qquad\widehat v^3\big|_{t=0}=\varphi \quad \mbox{in}\quad \Theta^3,
\end{equation*}
where
$$
\aligned
f(x;t):= &\, \frac 12 \big(V^2(x_1,x_2;t)-V^1(x_2;t)\chi(x_1)-V^1(x_1;t)\chi(x_2)\big)D^2\chi(x_3)\\
+ &\, \frac 12 \big(V^2(x_2,x_3;t)-V^1(x_3;t)\chi(x_2)-V^1(x_2;t)\chi(x_3)\big)D^2\chi(x_1)\\
+ &\, \frac 12 \big(V^2(x_3,x_1;t)-V^1(x_1;t)\chi(x_3)-V^1(x_3;t)\chi(x_1)\big)D^2\chi(x_2);\\
\varphi(x):= &\, (1-\chi(x_1))(1-\chi(x_2))(1-\chi(x_3)).
\endaligned
$$
Notice that due to the properties of the function \eqref{difference} the right-hand side $f$ is rapidly decaying as $|x|\to\infty$ while the initial data $\varphi$ is compactly supported. Therefore, we can apply the spectral decomposition for the operator exponent:
\begin{equation*}
    \aligned
&    \widehat v^3(\cdot\,;t) = \exp\big(\frac 12\Delta t\big)\varphi(\cdot)
     + \int\limits_0^t\exp\big(\frac 12\Delta(t-s)\big)f(\cdot\,;s)\,ds
    \\
&    =\! \int\limits_{[\lambda^3,\infty)}\!\! \exp\big(-\frac {\lambda}2 t\big)
      \,dE(\lambda)\varphi(\cdot)
      +\int\limits_0^t\int\limits_{[\lambda^3,\infty)}\!\!\exp\big(-\frac {\lambda}2 (t-s)\big)
      \,dE(\lambda)f(\cdot\,;s)\,ds,
      \endaligned
\end{equation*}
where $E(\lambda)$ is the projector-valued spectral measure generated
by the (self-adjoint) Dirichlet Laplacian operator in $L_2(\Theta^3)$,
see \cite[Ch. 6]{BS}.

Since $\lambda^3$ is a simple eigenvalue, we can rewrite the latter equality
as $\widehat v^3={\mathfrak v}_0+{\mathfrak v}_1$, where
\begin{equation*}
    \aligned
    {\mathfrak v}_0(\cdot\,;t) = &\, \Big(\exp\big(-\frac {\lambda^3}2 t\big)
    \big(\varphi,U^3\big)_{\Theta^3}\\
    + & \int\limits_0^t\exp\big(-\frac {\lambda^3}2 (t-s)\big)
    \big(f(\cdot\,;s),U^3\big)_{\Theta^3}\,ds\Big) \, U^3(\cdot);
    \endaligned
\end{equation*}
\begin{equation*}
    \aligned
    {\mathfrak v}_1(\cdot\,;t)= &\, {\mathfrak v}_{11}(\cdot\,;t)+{\mathfrak v}_{12}(\cdot\,;t)
    :=\int\limits_{[\Lambda^3,\infty)}\exp\big(-\frac {\lambda}2 t\big)\,dE(\lambda)
    \varphi(\cdot)\\
    + & \int\limits_0^t\int\limits_{[\Lambda^3,\infty)}\!
      \exp\big(-\frac {\lambda}2 (t-s)\big)\,dE(\lambda)f(\cdot\,;s)\,ds.
    \endaligned
\end{equation*}

First, we estimate $L_2$-norm of ${\mathfrak v}_1$. Estimate
of the first term is obvious:
$$
	\|{\mathfrak v}_{11}(\cdot\,;t); L_2(\Theta^3)\|^2
    \le\exp(-\Lambda^3 t)\,\|\varphi; L_2(\Theta^3)\|^2.
$$
Since $|f(\cdot\,;s)|<C\exp(-\frac {\lambda^2}2 s)$ and $f(\cdot\,;s)$ 
is uniformly rapidly decaying for $s\in[0,t]$, the estimate for
${\mathfrak v}_{12}$ follows from $\Lambda^3\le\lambda^2$:
$$
    \aligned
	\|{\mathfrak v}_{12}(\cdot\,;t); L_2(\Theta^3)\|^2
	\le &\, C\exp(-\Lambda^3 t)\,
	\Big(\int\limits_0^t\exp\big(\frac {\Lambda^3-\lambda^2}2\, s\big)\,ds\Big)^2\\
    \le &\, Ct^2\exp(-\Lambda^3 t).
    \endaligned
$$
Thus, we have $\|{\mathfrak v}_1(\cdot\,;t); L_2(\Theta^3)\|\le C(t+1)\exp(-\frac {\Lambda^3}2 t)$.
Therefore, standard parabolic estimates give
$|{\mathfrak v}_1(\cdot\,;t)|\le C(t+1)\exp(-\frac {\Lambda^3}2 t)$.\medskip

Next, we consider the second term in ${\mathfrak v}_0$:
\begin{equation}\label{(f,U)}
    \aligned
& \big(f(\cdot\,;s),U^3\big)_{\Theta^3}=\frac 12\big(\widehat v^2(x_1,x_2;s)D^2\chi(x_3),U^3\big)_{\Theta^3} \\
+\frac 12& \,\big(\widehat v^2(x_2,x_3;s)D^2\chi(x_1),U^3\big)_{\Theta^3}+\frac 12\big(\widehat v^2(x_3,x_1;s)D^2\chi(x_2),U^3\big)_{\Theta^3}
    \endaligned
\end{equation}
(recall that the function $\widehat v^2$ is defined in \eqref{difference}).

We introduce the notation
$$
    \aligned
\widetilde v^2(x_1,x_2;t)= & \,\widehat v^2(x_1,x_2;t)+\frac 12 V^1(x_2;t)\chi(x_1)+\frac 12 V^1(x_1;t)\chi(x_2) \\
= & \, V^2(x_1,x_2;t)-\frac 12 V^1(x_2;t)\chi(x_1)-\frac 12 V^1(x_1;t)\chi(x_2).
    \endaligned
$$
Integration by parts with regard to the equations for $U^3$, $V^2$ and $V^1$ provides
\begin{equation*}
    \aligned
   & \big(\widetilde v^2(x_1,x_2;s)D^2\chi(x_3),U^3\big)_{\Theta^3}
   =\big(\widetilde v^2(x_1,x_2;s)\chi(x_3),D^2_{x_3}U^3\big)_{\Theta^3}
\\
   = -\lambda^3 &\,\big(\widetilde v^2(x_1,x_2;s)\chi(x_3),U^3\big)_{\Theta^3}
   -\big(\widetilde v^2(x_1,x_2;s)\chi(x_3),(D^2_{x_1}+D^2_{x_2})U^3\big)_{\Theta^3}
\\
   = -\lambda^3 &\,\big(\widetilde v^2(x_1,x_2;s)\chi(x_3),U^3\big)_{\Theta^3}
   -\big((D^2_{x_1}+D^2_{x_2})\widetilde v^2(x_1,x_2;s)\chi(x_3),U^3\big)_{\Theta^3}
\\
   = -\lambda^3 &\,\big(\widetilde v^2(x_1,x_2;s)\chi(x_3),U^3\big)_{\Theta^3}
   -2\big(\partial_s\widetilde v^2(x_1,x_2;s)\chi(x_3),U^3\big)_{\Theta^3}
\\
  + \frac 12 &\, \big((V^1(x_2;s)D^2\chi(x_1)+V^1(x_1;s)D^2\chi(x_2))\chi(x_3),U^3\big)_{\Theta^3}
    \endaligned
\end{equation*}
(notice that all integrals converge due to the exponential decay of $U^3$). 

Thus, the first scalar product in \eqref{(f,U)} is rewritten as follows:
\begin{equation*}
    \aligned
   & \big(\widehat v^2(x_1,x_2;s)D^2\chi(x_3),U^3\big)_{\Theta^3}
\\
   = -\lambda^3 &\,\big(\widetilde v^2(x_1,x_2;s)\chi(x_3),U^3\big)_{\Theta^3}
   -2\big(\partial_s\widetilde v^2(x_1,x_2;s)\chi(x_3),U^3\big)_{\Theta^3}
\\
  + \frac 12 &\, \big((V^1(x_2;s)D^2\chi(x_1)+V^1(x_1;s)D^2\chi(x_2))\chi(x_3),U^3\big)_{\Theta^3}
\\
  - \frac 12 &\, \big((V^1(x_2;s)\chi(x_1)+V^1(x_1;s)\chi(x_2))D^2\chi(x_3),U^3\big)_{\Theta^3},
    \endaligned
\end{equation*}
We rewrite other terms in \eqref{(f,U)} in a similar way and obtain
\begin{equation*}
    \aligned
  &\frac 12\int\limits_0^t\exp\big(-\frac {\lambda^3}2 (t-s)\big)
     \big(f(\cdot\,;s),U^3\big)_{\Theta^3}\,ds=
-\exp\big(-\frac {\lambda^3}2 t\big) \int\limits_0^t
    \exp\big(\frac {\lambda^3}2 s\big)\times
\\
    \times &\, \Big[\big(\frac {\lambda^3}2\widetilde v^2(x_1,x_2;s)\chi(x_3)
     +\partial_s\widetilde v^2(x_1,x_2;s)\chi(x_3),U^3\big)_{\Theta^3}
\\
    + &\, \big(\frac {\lambda^3}2\widetilde v^2(x_2,x_3;s)\chi(x_1)
     +\partial_s\widetilde v^2(x_2,x_3;s)\chi(x_1),U^3\big)_{\Theta^3}
\\
    + &\, \big(\frac {\lambda^3}2\widetilde v^2(x_3,x_1;s)\chi(x_2)
     +\partial_s\widetilde v^2(x_3,x_1;s)\chi(x_2),U^3\big)_{\Theta^3}\Big]\,ds
\\
  = &\, - \Big[
  \big(\widetilde v^2(x_1,x_2;s)\chi(x_3),U^3\big)_{\Theta^3}
  + \big(\widetilde v^2(x_2,x_3;s)\chi(x_1),U^3\big)_{\Theta^3}
\\
  &\, + \widetilde v^2(x_3,x_1;s)\chi(x_2),U^3\big)_{\Theta^3}\Big]
  \cdot\exp\big(-\frac {\lambda^3}2 (t-s)\big)\bigg|_{s=0}^{s=t}.
  \endaligned
\end{equation*}
The substitution at $s=t$ is $O\big(\exp(-\frac {\lambda^2}2 t)\big)$, whereas
$$
  \aligned
&\, \widetilde v^2(x_1,x_2;0)\chi(x_3)  + \widetilde v^2(x_2,x_3;0)\chi(x_1) + \widetilde v^2(x_3,x_1;0)\chi(x_2) \\
= &\, \chi(x_1)+\chi(x_2)+\chi(x_3)-\chi(x_1)\chi(x_2)-\chi(x_2)\chi(x_3)-\chi(x_3)\chi(x_1)
\\
= &\, 1-\varphi(x) \quad \mbox{in}\quad \Theta^3,
  \endaligned
$$

We add the first term in ${\mathfrak v}_0$ and arrive at
\begin{equation*}
   V^3(x;t)=\big(1,U^3\big)_{\Theta^3}\,U^3(x)
   \exp(-\frac {\lambda^3}2 t)
   +
   O\Big((t+1)\exp(-\frac {\Lambda^3}2 t)\Big).
\end{equation*}
This gives \eqref{asymp1} for $n=3$. 

Also it follows from the proof that $\widehat v^3(\cdot\,;t)$ is rapidly decaying as $|x|\to\infty$. This fact, in turn, allows us to manage the same proof for $n=4$ etc.
\end{proof}

\section{Application to the exit time problem}
\label{exit}

Let $W(t)=\big(W_j(t)\big)\big|_{j=1}^n$, $t\ge 0$, be a standard $n$-dimensional
Brownian motion with independent coordinates, $n\ge 1$, and let $W^x(t)=x+W(t)$
denote a Brownian motion starting from a point $x\in \bbR^n$.

We begin with two small deviation problems: find the asymptotic behavior of
\begin{eqnarray*}
\mathbb{P}\big\{ \| \min_{1\le j\le n} W_j\| \le \varepsilon \big\}, \qquad \varepsilon \to 0,
\\
  \mathbb{P}\big\{ \| \min_{1\le j\le n} |W_j|\| \le \varepsilon \big\} , \qquad \varepsilon \to 0.
\end{eqnarray*}
Here $\|\cdot\|$ stands for sup-norm, i.e. $\|X\|=\max\limits_{0 \le t \le 1} |X(t)|$.

These problems are immediately reduced to the so-called exit time asymptotics. Denote by
$\tau_{x,{\cal O}}$ the exit time for $W^x$ from a domain ${\cal O}\subset\bbR^n$ containing
a point $x$, and put $\tau_{\cal O}=\tau_{0,{\cal O}}$. Then, by self-similarity of $W$,
we have
\begin{eqnarray*}
   \mathbb{P}\big\{ \| \min_{1\le j\le n} W_j\| \le \varepsilon \big\}
   &=& \mathbb{P}\big\{ \tau_{\Theta^n_1} \ge \varepsilon^{-2}\big\},
\\
   \mathbb{P}\big\{ \| \min_{1\le j\le n} |W_j|\| \le \varepsilon\big\}
    &=& \mathbb{P}\big\{ \tau_{\Theta^n_2} \ge \varepsilon^{-2}\big\}.
\end{eqnarray*}

It is well known that the function
$$ 
  v(x;t):= \mathbb{P}\{W^x(s)\in {\cal O}, 0\le s < t\} = \mathbb{P}\{\tau_{x,{\cal O}}\ge t\}
$$
solves the initial-boundary value problem \eqref{heat}. Therefore, formula \eqref{heat1} for ${\cal O}=\Theta^1$ yields the asymptotics
\begin{equation*} 
  \mathbb{P}\{\tau_{x,\Theta^1}\ge t\} \sim \frac{4}{\pi}\,\cos(\pi x/2)  \exp(-\pi^2 t/8)
    \quad \textrm{as}\quad t\to \infty,
\end{equation*}
see \cite{Chung} or \cite[V.2, Ch. X]{Feller}. Evidently, the same result is true for the layer ${\cal O}=\Theta^1\times\bbR^{n-1}$.

In the recent paper \cite{LN} it was shown that 
$$
\mathbb{P}\{\tau_{x,\Theta^2}\ge t\} \sim {\cal A}_2 \, U^2(x)\exp\big(-\frac {\lambda_\bullet(\Theta^2)}2 t\big)    \quad \textrm{as}\quad t\to \infty.
$$
Therefore,
$$
\lim\limits_{t\to\infty} t^{-1}\log\mathbb{P}\{\tau_{x,\Theta^2}\ge t\}=-\frac {\lambda_\bullet(\Theta^2)}2>-\frac {\pi^2}8,
$$
i.e. the long stays in a two-dimensional corner or cross are more likely than
those in a strip of the same width.

Now, using the result of Theorem \ref{asymp-heat}, we can write
\begin{equation} \label{tau-n}
\mathbb{P}\{\tau_{x,\Theta^n}\ge t\} \sim {\cal A}_n \, U^n(x)\exp\big(-\frac {\lambda_\bullet(\Theta^n)}2 t\big)    \quad \textrm{as}\quad t\to \infty,
\end{equation}
therefore Theorems \ref{spectr-contin}, \ref{discrete1} and Corollary \ref{discrete2} show that
$$
\lim\limits_{t\to\infty} t^{-1}\log\mathbb{P}\{\tau_{x,\Theta^n}\ge t\}>
\lim\limits_{t\to\infty} t^{-1}\log\mathbb{P}\{\tau_{x,\Theta^{n-1}}\ge t\}.
$$

By Proposition \ref{eigenfunction}, the main term of the tail 
probability \eqref{tau-n} depends on the initial point $x$ exponentially.
So, if we start from a remote $x$, the optimal strategy to stay in $\Theta^n_1$ or in $\Theta^n_2$
for a longer time is to run towards the origin and then stay somewhere near it. This phenomenon was discovered in \cite{LN} for $n=2$.

\begin{remark}
It follows from Theorem \ref{bounded-from-zero1} and Remark \ref{bounded-from-zero2} that for any $n\ge 2$
$$
\lim\limits_{t\to\infty} t^{-1}\log\mathbb{P}\{\tau_{x,\Theta^n_1}\ge t\}\le -\frac {\pi^2}{32},
$$
while for the cross the existence of an estimate from above independent of the dimension is an open problem.
\end{remark}

\paragraph*{Acknowledgements.} We are grateful to Prof. M.A. Lifshits for valuable comments. The research of Sections \ref{contin} and \ref{discrete} was supported by the Russian Science Foundation grant 17-11-01003. The research of Sections \ref{teplo} and \ref{exit} was supported by the joint RFBR--DFG grant 20-51-12004.

\end{document}